    \pgfplotsset{
        compat=1.15,
        width=8cm,
    }
\newtheorem{thm}{Theorem}[section]
\providecommand{\thmautorefname}{Theorem}
\newtheorem{definition}[thm]{Definition}
\newtheorem{cor}[thm]{Corollary}
\providecommand{\corautorefname}{Corollary}
\newtheorem{lem}[thm]{Lemma}
\providecommand{\lemautorefname}{Lemma}
\newtheorem{prop}[thm]{Proposition}
\providecommand{\propautorefname}{Proposition}
\theoremstyle{definition}
\newtheorem{example}{Example}[section]
\newcommand{\secautorefname}{Section}
\newcommand{\ssecautorefname}{Subsection}
\newcommand{\N}{\mathbb{N}}
\newcommand{\Z}{\mathbb{Z}}
\newcommand{\R}{\mathbb{R}}
\newcommand{\C}{\mathbb{C}}
\newcommand{\PP}{\mathbb{P}}
\newcommand{\T}{\mathbb{T}}
\newcommand{\E}[1]{\mathbb{E}\left[#1\right]}
\newcommand{\Ee}[1]{\mathbb{E}_\varepsilon\left[#1\right]}
\newcommand{\Prob}[1]{\mathbb{P}\left(#1\right)}
\newcommand{\Probe}[1]{\mathbb{P}_\varepsilon\left(#1\right)}
\newcommand{\dvg}{\mathord{{\rm div}}\,}
\newcommand{\dvgphi}{\mathord{{\rm div}}^\phi}
\newcommand{\dvgphien}{\mathord{{\rm div}}^{\phi_n}}
\newcommand{\dvgphin}{\mathord{{\rm div}}^{\phi_{n+1}}}\newcommand{\Dphi}{\Delta^{\hspace{-0.05cm}\phi}}
\newcommand{\Dphien}{\Delta^{\hspace{-0.05cm}\phi_n}}
\newcommand{\Dphin}{\Delta^{\hspace{-0.05cm}\phi_{n+1}}}
\newcommand{\nablaphi}{\nabla^{\phi}}
\newcommand{\nablaphien}{\nabla^{\phi_n}}
\newcommand{\nablaphin}{\nabla^{\phi_{n+1}}}
\newcommand{\curl}{\mathord{{\rm curl}}\,}
\newcommand{\curlphi}{\mathord{{\rm curl}}^\phi}
\newcommand{\curlphin}{\mathord{{\rm curl}}^{\phi_{n+1}}}
\newcommand{\rmb}[1]{\textcolor{blue}{#1}}
\newcommand{\rmkk}[1]{\textcolor{red}{#1}}
\newcommand{\rmm}[1]{\textcolor{magenta}{#1}}
\newcommand{\rmg}[1]{\textcolor[rgb]{0.50,0.25,0.00}{#1}}
\newcommand{\Var}[1]{\mathord{{\rm Var}}\left[#1\right]}
\newcommand{\Cov}[1]{\mathord{{\rm Cov}}\left(#1\right)}
\theoremstyle{remark}
\newtheorem{rmk}[thm]{Remark}
\numberwithin{equation}{section}
\def\Side{\ChessSide}
\newcommand\ChessBoxA{%
  {\fboxsep=0pt\fbox{\color{\ChessColori}\rule{\Side}{\Side}}}}
\newcommand\ChessBoxB{%
  {\fboxsep=0pt\fbox{\color{\ChessColorii}\rule{\Side}{\Side}}}}
\newcommand\Row[1]{%
  \par\nobreak\nointerlineskip\vskip-\fboxrule%
  \@tfor\@tempa:=#1 \do {\csname ChessBox\@tempa\endcsname\kern-\fboxrule}}
\title[Measure-preserving selection of characteristics]{On measure-preserving selection \\ of solutions of ODEs}
\author{Umberto Pappalettera}
\address[U. Pappalettera]{Fakult\"at f\"ur Mathematik, Universit\"at Bielefeld, D-33501 Bielefeld, Germany}
\email{upappale(at)math.uni-bielefeld.de}
\keywords{}
\date\today
\begin{document}

\begin{abstract}
For every $k \in \N$ and $\alpha \in (0,1)$ we construct a divergence-free $u \in C^k([0,T],C^\alpha(\T^d,\R^d))$, $d \geq 2$, such that there is no measurable selection of solutions of the ODE $\dot{X}_t = u(t,X_t)$ that preserves the Lebesgue measure.
\end{abstract}

\maketitle

\section{Introduction}

Let $\T^d := (\R/\Z)^d$ denote the $d$-dimensional torus, $d \geq 2$. 
Given $T>0$ and a measurable velocity field $u : [0,T] \times \T^d \to \R^d$ satisfying the incompressibility condition $\dvg u (t, \cdot) = 0$ in the sense of distributions, we are interested in the ODE
\begin{align} \label{eq:ODE}
X(t,x) = x + \int_0^t u(s,X(s,x)) ds,
\quad
x \in \T^d,\,
t \in [0,T].
\end{align} 

Equation \eqref{eq:ODE} can be uniquely solved for every initial condition $x \in \T^d$ when $u$ is smooth. 
In this case, looking at the evolution of the Jacobian determinant $J := \mbox{det}\,D_x X$
\begin{align*}
\partial_t J (t,x) = J (t,x)\, \dvg u (t,X(t,x)) = 0, 
\end{align*} 
we have that the map $X_t := X(t,\cdot)$ preserves the $d$-dimensional Lebesgue measure $\mathscr{L}^d$ on $\T^d$ for every $t \in [0,T]$, namely
\begin{align} \label{eq:meas_pres}
(X_t)_\sharp \mathscr{L}^d = \mathscr{L}^d.
\end{align}

Given a continuous velocity field $u$, solutions of \eqref{eq:ODE} exist for every initial condition $x \in \T^d$ by Peano Theorem, but are not necessarily unique. 
In this work we consider the following question: is there a Lebesgue-measurable way $x \mapsto X(\cdot,x)$ to bundle together solutions of \eqref{eq:ODE} for almost every initial condition $x \in \T^d$, so that the Lebesgue measure is preserved by $X_t$?
In other words, we are interested in the existence of a \emph{measure-preserving selection of characteristics}, according to the following:
\begin{definition}
Given a measurable $u : [0,T] \times \T^d \to \R^d$ with $\dvg u(t,\cdot) = 0$, we say that $X : [0,T] \times \T^d \to \T^d$ is a measure-preserving selection of characteristics if:
\begin{itemize}
\item[($i$)]
For every $t \in [0,T]$, $X(t,\cdot) : \T^d \to \T^d$ is Lebesgue measurable and \eqref{eq:meas_pres} holds;
\item[($ii$)]
For almost every $x \in \T^d$ the map $t \mapsto u(t,X(t,x))$ is in $L^1([0,T])$ and \eqref{eq:ODE} holds.
\end{itemize}
\end{definition}

In the rough setting, a classical result is due to Di Perna and Lions \cite{DiLi89}: under the additional assumptions of essential boundedness $u \in L^\infty_{t,x}$ and Sobolev regularity $u \in L^1_t W^{1,1}_x$, they show existence and uniqueness of a measure-preserving selection of characteristics $X$ which additionally satisfies the flow property $X(t+s,x)=X(t,X(s,x))$ for almost every $x \in \T^d$ and every $t,s,t+s \in [0,T]$. 
A successive extension to $u \in L^\infty_{t,x} \cap L^1_t BV_x$ fields is due to Ambrosio \cite{Am04}.

We point out that in \cite{DiLi89,Am04} uniqueness of a measure-preserving selection of characteristics is not a consequence of uniqueness of solutions to \eqref{eq:ODE} for a.e. starting point $x \in \T^d$. 
The latter requires stronger conditions on $u$, for instance $u \in L^\infty_t W^{1,r}_x$ with $r>d$ as proved in \cite[Corollary 5.2]{CaCr21}. 
On the other hand, in \cite{BrCoDL20} the authors give, for every $r < d$ and $p<\infty$, an example of $u \in C_t W^{1,r}_x \cap C_t L^p_x$ leading to non-unique solutions of \eqref{eq:ODE} for a positive measure set of starting points. A refinement of this result is presented in \cite{PiSo23}.
More recently, for every $r < d$ and $\alpha < 1$ Kumar \cite{Ku23+} further improved these results constructing a vector field $u \in C_t W^{1,r}_x \cap C^\alpha_{t,x}$ with non-unique solutions of \eqref{eq:ODE} for a full-measure set of starting points.

Notice that in the aforementioned works the theory of \cite{DiLi89,Am04} still applies. Therefore, \eqref{eq:meas_pres} can be seen as a selection criterium for solutions of \eqref{eq:ODE}.
However, as pointed out already in \cite[Section IV.2]{DiLi89}, \eqref{eq:meas_pres} might not single out a unique $X$ when $u$ has not a full (weak) spatial derivative.

Here we are mostly interested in the existence issue.
In general, understanding whether or not a measure-preserving selection of characteristics is possible requires a good understanding of the set of \emph{all} solutions of the ODE \eqref{eq:ODE}; but given a velocity field $u$ that is not weakly differentiable, non-uniqueness of \eqref{eq:ODE} can be extremely wild and hard to characterise.

Our main result is the following:
\begin{thm} \label{thm:main}
For every $k \in \N$ and $\alpha\in(0,1)$ there exists a velocity field $u \in C^k_t C^\alpha_x$, satisfying $\dvg u(t,\cdot) = 0$ in the sense of distributions, without any measure-preserving selection of characteristics.
\end{thm}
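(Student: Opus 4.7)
My plan is to reduce \autoref{thm:main} to constructing a divergence-free $u \in C^k_t C^\alpha_x(\T^d,\R^d)$ satisfying what I will call a \emph{compression property}: there exist a time $\tau \in (0,T]$ and measurable sets $A, B \subset \T^d$ with $\mathscr{L}^d(A) > \mathscr{L}^d(B)$ such that every absolutely continuous solution $X$ of \eqref{eq:ODE} with $X(0) \in A$ satisfies $X(\tau) \in B$. Given such a $u$, no measure-preserving selection $(X_t)$ can exist: for any such selection the inclusion $X_\tau^{-1}(B) \supset A$ holds modulo null sets, so $\mathscr{L}^d(B) = \mathscr{L}^d(X_\tau^{-1}(B)) \geq \mathscr{L}^d(A)$, contradicting $\mathscr{L}^d(A) > \mathscr{L}^d(B)$. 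This step is essentially measure-theoretic and decoupled from the analytic construction.

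The construction itself I would carry out in dimension $d=2$ and then extend to $d \geq 3$ by appending zero components, which preserves both divergence-freeness and regularity. I would seek $u$ of the form $u = \nabla^\perp \psi$ for a time-dependent stream function $\psi \in C^k_t C^{1+\alpha}_x(\T^2)$, which makes $u$ divergence-free and of the prescribed regularity automatically. To implement the compression property I would use an iterative scheme: partition a subinterval $[0,\tau] \subset [0,T]$ into consecutive windows $[t_n, t_{n+1}]$ with $t_n \nearrow \tau$, and on each window install a rescaled building block $\psi_n$ acting at spatial scale $\delta_n \searrow 0$. Each $\psi_n$ is a Morse-type Hamiltonian whose flow, over the allotted window, forces every trajectory present in a current target set $A_n \subset \T^2$ to arrive, by time $t_{n+1}$, in a strictly smaller next target $A_{n+1} \subset A_n$. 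The funneling is enabled by the hypothesis $\alpha < 1$: trajectories of a $C^\alpha$ Hamiltonian flow can reach critical points of the stream function in finite time along separatrices, and at those critical points non-uniqueness becomes available. The scaling parameters (amplitudes $\lambda_n$, spatial scales $\delta_n$, window lengths) are chosen so that $\|\psi_n\|_{C^k_t C^{1+\alpha}_x}$ is summable and $\sum (t_{n+1}-t_n) < \infty$.

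The chief obstacle is making the compression property robust against the choice of selection: at each critical point visited, every local continuation of a solution---independent of how the non-uniqueness is resolved---must land inside $A_{n+1}$. This forces a careful topological design of the separatrix structure of $\psi_n$, so that all outgoing sectors at the relevant critical points lie in $A_{n+1}$; roughly, the outflow graph of $\psi_n$ must be many-to-one from $A_n$ to $A_{n+1}$ regardless of branch choice. A secondary obstacle is parameter balancing for arbitrary $k$: the naive piecewise construction is only $C^0_t$, so one must interpolate smoothly between windows, with amplitudes and scales tuned so that neither the $C^\alpha_x$ nor the $C^k_t$ norms blow up. The hypothesis $\alpha<1$ is crucial, since for $\alpha = 1$ classical uniqueness and Liouville's theorem would automatically give $(X_t)_\sharp \mathscr{L}^d = \mathscr{L}^d$ for the (then unique) smooth flow.
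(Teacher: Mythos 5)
There is a genuine gap, and it is in the very first step: the \emph{compression property} you reduce to cannot hold for any bounded divergence-free field, so the separatrix construction sketched afterwards cannot succeed no matter how carefully the critical points are arranged. Since $\dvg u=0$, the constant family $\mu_t\equiv\mathscr{L}^d$ solves the continuity equation $\partial_t\mu_t+\dvg(u\mu_t)=0$, and Ambrosio's superposition principle (applicable because $u$ is bounded) represents it as $\mu_t=(e_t)_\sharp\eta$ for a probability measure $\eta$ on paths concentrated on absolutely continuous solutions of \eqref{eq:ODE}. If every solution starting in $A$ landed in $B$ at time $\tau$, then
\[
\mathscr{L}^d(B)=\eta\bigl(\{\gamma:\gamma(\tau)\in B\}\bigr)\geq\eta\bigl(\{\gamma:\gamma(0)\in A\}\bigr)=\mathscr{L}^d(A),
\]
contradicting $\mathscr{L}^d(A)>\mathscr{L}^d(B)$. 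In other words, a measure-preserving \emph{randomized} selection (a measure on paths) always exists for these fields; any obstruction to the theorem must therefore distinguish deterministic selections (one path per starting point) from superpositions, and a property quantified over \emph{all} solutions, as yours is, can never do that.

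The paper's mechanism is built precisely around this constraint. One constructs $v$ whose flow $Y$ is \emph{unique} (the field is $W^{1,\infty}_x$ locally in time on $[0,T)$) and such that $Y_T$ is measure preserving and exactly two-to-one, the two preimages of a.e.\ point being exchanged by a fixed-point-free measure-preserving involution $\sigma$ (a digit flip in the dyadic expansion). Setting $u(t,x):=-v(T-t,x)$, every solution of \eqref{eq:ODE} satisfies $Y_T(X_T(x))=x$, so $X_T(x)$ must be one of the two preimages of $x$. The \emph{full} preimage $Y_T^{-1}(A)$ of a large set $A$ still has measure $|A|$ --- consistent with superposition --- but a selection picks only one branch per point, so $X_T(A)$ and $\sigma(X_T(A))$ are disjoint sets of equal measure, forcing $|X_T(A)|\leq 1/2$, while measure preservation would require $|X_T(A)|\geq|A|$. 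Note this produces the failure $\mathscr{L}^d\not\ll(X_T)_\sharp\mathscr{L}^d$ (the image is too small), which is the opposite of the concentration your compression property would create; the author explicitly leaves open whether $(X_T)_\sharp\mathscr{L}^d\ll\mathscr{L}^d$ can fail. Your instinct that some uniqueness must survive away from a critical time is right, but it has to be arranged for the time-reversed flow, and the non-uniqueness exploited is a two-to-one merging rather than a funnel into a set of smaller measure.
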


Therefore, the regularity threshold on $u$ that guarantees existence of a measure-preserving selection of characteristics is morally the same guaranteeing existence \emph{and} uniqueness (that is, one full weak spatial derivative).

The main ``obstruction'' to the existence is the fact that forward flows (call it $Y$) associated with incompressible velocity fields $v \in C^k_t C^\alpha_x$ do not need to be essentially injective in the sense of \cite{Am04}:
\begin{align*}
\int_{A_1} \delta_{Y(t,x)} dx
\perp
\int_{A_2} \delta_{Y(t,x)} dx
\end{align*}  
for every $t \in [0,T]$ and disjoint Lebesgue measurable sets $A_1, A_2 \subset \T^d$. 

Therefore, in order to prove \autoref{thm:main} we construct $v$ such that $Y_T : \T^d \to \T^d$ is exactly two-to-one on a large subset $A$ of its domain (in a proper measure theoretic sense). 
To see this, we use that the (unique) forward flow $Y$ is such that $Y(T,x)$ has an explicit expression when looking at the dyadic expansion of $x \in A \subset \T^2$.
Then, taking the backward velocity field $u = u(t,x) := -v(T-t,x)$, we have that any measurable selection of characteristics $X$ can not reach much more than ``half'' of the torus at time $T$, in particular $(X_T)_\sharp \mathscr{L}^d \neq \mathscr{L}^d$.   
More generally, for the $u$ we construct the absolute continuity $\mathscr{L}^d \ll (X_T)_\sharp \mathscr{L}^d$ fails for every measurable selection of characteristics $X$.
On the other hand, we are currently unable to say if $(X_T)_\sharp \mathscr{L}^d \ll \mathscr{L}^d $ holds; namely, if measurable selections of characteristics are Regular Lagrangian Flows as defined in \cite{Am04}. 


Let us close this introduction with the following observation. 
Let $\{\chi^\delta\}_{\delta \in (0,1)}$ be standard spatial mollifiers and consider the divergence-free velocity fields $u^\delta := u \ast \chi^\delta$, where $u$ is given by \autoref{thm:main}.  
By spatial smoothness, for every $\delta \in (0,1)$ the ODE \eqref{eq:ODE} with $u$ replaced by $u^\delta$ has a unique solution $X^\delta$, that additionally preserves the Lebesgue measure.
Since preserving the Lebesgue measure is stable with respect to convergence of the flows in $L^1_{t,x}$, as a consequence of \autoref{thm:main} we have:
\begin{cor}
The family $\{ X^\delta\}_{\delta \in (0,1)}$ is not strongly precompact in $L^1_{t,x}$.
\end{cor}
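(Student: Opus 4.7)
The plan is to argue by contradiction. Suppose $\{X^\delta\}_{\delta \in (0,1)}$ were strongly precompact in $L^1_{t,x}$. Then for the sequence $\delta_n = 1/n$, there would exist a (not relabelled) subsequence and a limit $X \in L^1_{t,x}$ such that $X^{\delta_n} \to X$ strongly in $L^1_{t,x}$ and, along a further subsequence, almost everywhere on $[0,T] \times \T^d$. The goal is to show that $X$ is a measure-preserving selection of characteristics for $u$, contradicting \autoref{thm:main}.

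First I would establish equi-Lipschitz continuity in time. Since $u \in C^k_t C^\alpha_x$ is continuous on the compact space $[0,T] \times \T^d$, it is bounded and uniformly continuous; consequently $\|u^\delta\|_\infty \le \|u\|_\infty =: M$ and $u^\delta \to u$ uniformly on $[0,T] \times \T^d$ as $\delta \to 0$. The integral form of the ODE then gives $|X^\delta(t,x) - X^\delta(s,x)| \le M|t-s|$ uniformly in $\delta$ and $x$. Combined with pointwise a.e. convergence, Fubini's theorem produces a full-measure set $A \subset \T^d$ on which $X^{\delta_n}(t,x) \to X(t,x)$ for a.e.\ $t$; the equi-Lipschitz-in-$t$ bound upgrades this to uniform convergence on $[0,T]$ for every $x \in A$, and the limit $X(\cdot,x)$ is $M$-Lipschitz.

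Next I would pass to the limit in the two defining properties. For every $x \in A$, uniform convergence $X^{\delta_n}(\cdot,x) \to X(\cdot,x)$ together with uniform convergence $u^{\delta_n} \to u$ gives $u^{\delta_n}(s, X^{\delta_n}(s,x)) \to u(s, X(s,x))$ uniformly in $s \in [0,T]$; dominated convergence in the integral equation yields \eqref{eq:ODE} for every $t \in [0,T]$ and every $x \in A$, verifying property $(ii)$. For property $(i)$, fix $t \in [0,T]$: pointwise convergence $X^{\delta_n}(t,\cdot) \to X(t,\cdot)$ on $A$ forces convergence in measure on $\T^d$, and hence weak convergence of the pushforwards $(X^{\delta_n}(t,\cdot))_\sharp \mathscr{L}^d$ to $(X(t,\cdot))_\sharp \mathscr{L}^d$; since each $(X^{\delta_n}(t,\cdot))_\sharp \mathscr{L}^d$ equals $\mathscr{L}^d$ (as $X^{\delta_n}$ is the smooth incompressible flow of $u^{\delta_n}$), so does the limit. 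Redefining $X(t,\cdot)$ as the identity on the null set $\T^d \setminus A$ preserves both properties.

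Thus $X$ would be a measure-preserving selection of characteristics for $u$, contradicting \autoref{thm:main}. The only delicate point is upgrading $L^1_{t,x}$ convergence to uniform-in-$t$ convergence on a common full-measure set of initial conditions, which is needed to pass to the limit inside $u(s, \cdot)$; this is provided cheaply by the equi-Lipschitz-in-$t$ bound, itself a consequence of $\|u\|_\infty < \infty$. Everything else is a routine passage to the limit.
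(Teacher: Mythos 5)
Your argument is correct and follows exactly the route the paper intends (the paper only states it in one line: measure preservation is stable under $L^1_{t,x}$ convergence of the flows, and a subsequential limit would be a measure-preserving selection of characteristics, contradicting \autoref{thm:main}). Your write-up simply supplies the standard details — the uniform bound $\|u^\delta\|_\infty\le\|u\|_\infty$, the equi-Lipschitz-in-time estimate, and uniform convergence $u^\delta\to u$ — all of which are valid.
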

We point out that precompactness in $L^1_{t,x}$ holds true as soon as the family $\{u^\delta\}_{\delta \in (0,1)}$ is uniformly bounded in $L^\infty_{t,x} \cap W^{1,1}_{t,x}$ and the flows are uniformly nearly incompressible\footnote{Namely, there exists $C>0$ such that $C^{-1} \leq \mbox{det}\,D_x X^\delta \leq C$ for every $\delta \in (0,1)$. This condition is immediately implied by smoothness and $\dvg u^\delta = 0$.}: this is the content of Bressan's compactness conjecture, proved by Bianchini and Bonicatto in \cite{BiBo20}.

\section*{Acknowledgements}
The author is grateful to Paolo Bonicatto and Elia Bruè for the useful discussions on the topic, and in particular to the latter for pointing out the reference \cite{ElZl19}.
This project has received funding from the European Research Council (ERC) under the European Union’s Horizon 2020 research and innovation programme (grant agreement No. 949981).

\section{An explicit construction with $u \in L^\infty_{t,x}$}
\label{sec:bounded}
For the sake of a clear presentation, in this section we preliminarily construct an incompressible velocity field $u \in L^\infty_{t,x}$ such that the backward equation
\begin{align} \label{eq:back}
Y(t,x) = x - \int_0^t u(T-s,Y(s,x)) ds,
\quad
t \in [0,T],
\end{align}
has a unique solution for almost every initial condition $x \in \T^d$ and the map $Y_T : \T^d \to \T^d$ is exactly two-to-one, up to negligible sets. 

The idea is to let $Y_T$ essentially act on elements $x \in \T^d$ as a change of some (properly chosen) digits is the dyadic expansion of $x$, so that we can explicitly characterise almost every pair $(x,y) \in \T^d \times \T^d$ such that $Y_T(x) = y$.   

In the forthcoming \autoref{sec:holder}, we smooth out $u$ to obtain a new incompressible velocity field, with the desired H\"older regularity, such that the (unique) solution $Y^\delta$ of the backward equation \eqref{eq:back} is such that $Y^\delta(T,x) = Y(T,x)$ for every $x \in A \subset \T^d$, for some measurable $A$ with Lebesgue measure $|A| \geq 1-\delta$, with $\delta>0$ fixed but arbitrary.

For simplicity we restrict ourselves to $d=2$ but the construction generalizes easily to higher dimensions.

\subsection{Dyadic expansion} \label{ssec:dyadic}
For our purposes it is convenient to identify the Lebesgue measure on Borel subsets of two dimensional torus $\T^2$ with a probability measure on (pairs of) infinite sequences of digits $0,1$.
The following construction is more or less classical, and can be found for instance in \cite[p. 159]{Ha50}.

Let us consider the measurable space $(\Omega_0,\mathcal{F}_0)$ given by 
\begin{align*}
\Omega_0 := \{(0,0),(0,1),(1,0),(1,1)\},
\quad
\mathcal{F}_0 := 2^{\Omega_0},
\end{align*}
endowed with the uniform probability $\PP_0$ defined as $\PP_0(\{a\}):=1/4$ for every $a \in \Omega_0$. 
For every $n \in \N$, $n \geq 1$ consider an identical copy $(\Omega_n,\mathcal{F}_n,\PP_n)$ of $(\Omega_0,\mathcal{F}_0,\PP_0)$ and take the product space
\begin{align*}
\tilde{\Omega} := \prod_{n \geq 1} \Omega_n,
\quad
\tilde{\mathcal{F}} := \bigotimes_{n \geq 1} \mathcal{F}_n,
\quad
\tilde{\PP} := \bigotimes_{n \geq 1} \PP_n.
\end{align*}
Elements $\omega \in \tilde{\Omega}$ can be seen as pairs $\omega = (\omega^1,\omega^2)$ of infinite sequences of digits $0,1$.
In order to obtain a one-to-one correspondence with points of $\T^2$, we need to remove from $\tilde{\Omega}$ all those elements for which either $\omega^1$ or $\omega^2$ is a sequence definitely equal to $1$. More precisely, define for $i \in \{1,2\}$
\begin{align*}
\hat{\Omega}^i
&:=
\{ \omega = (\omega^1,\omega^2) \in \Omega :\,
\exists\, n_0 \in \N \mbox{ such that }
\omega^i_n = 1,\, 
\forall n > n_0
\}
\\
&=
\bigcup_{n_0 \in \N}
\bigcap_{n > n_0}
\{ \omega^i_n = 1 \},
\end{align*}
which are $\tilde{\mathcal{F}}$-measurable and $\tilde{\PP}$-negligible sets, and define
\begin{align*}
\Omega := \tilde{\Omega} \setminus (\hat{\Omega}^1 \cup \hat{\Omega}^2),
\quad
\mathcal{F} := \tilde{\mathcal{F}} \cap \Omega
:= \{ A \cap \Omega : A \in \tilde{\mathcal{F}} \},
\quad
\PP(A \cap \Omega) := \tilde{\PP}(A).
\end{align*} 

Then, the probability space $(\Omega,\mathcal{F},\PP)$ is isomorphic to the space of Borel subsets of $[0,1)^2$, endowed with the Lebesgue measure, via the bijection $x : \Omega \to [0,1)^2$ defined as
\begin{align} \label{eq:dyadic}
x(\omega) = (x^1,x^2),
\quad
x^1 := \sum_{n \geq 1} 2^{-n} \omega^1_n ,
\quad
x^2 := \sum_{n \geq 1} 2^{-n} \omega^2_n .
\end{align}
More precisely, $x$ is $\mathcal{F}$-$\mathcal{B}([0,1)^2)$ measurable, bijective, with measurable inverse $x^{-1}$ and we have the identity of the pushforward measures $(x)_\sharp \PP = \mathscr{L}^2$ and $(x^{-1})_\sharp \mathscr{L}^2 = \PP$. 
Identifying $\T^2 \sim [0,1)^2$, we call the representation above the \emph{dyadic expansion} of a point $x \in \T^2$.

\subsection{Backward Flow}
Let $\tau \in (0,1)$ be a time parameter and define 
\begin{align*}
\tau_0 := 0,
\quad
\tau_n := \sum_{q = 1}^n \tau^q,
\quad
n \in \N,\,
n \geq 1.
\end{align*}
Recall the dyadic expansion \eqref{eq:dyadic} of points $x=(x^1,x^2) \in \T^2$. 
We want the map $Y$ to act in the following way (say on a full-measure set of points):
\begin{itemize}
\item
From time $s=\tau_0$ to time $t=\tau_1$, if $\omega^2_2=0$ then $Y_{s,t}(x)=x$, and if $\omega^2_2=1$ then $Y_{s,t}(x)$ has the same dyadic expansion of $x$ except for a change in the digit $\omega^1_1$;
\item
From time $s=\tau_1$ to time $t=\tau_2$, if $\omega^1_3=0$ then $Y_{s,t}(x)=x$, and if $\omega^1_3=1$ then $Y_{s,t}(x)$ has the same dyadic expansion of $x$ except for a change in the digit $\omega^2_2$;
\item
\dots
\item
From time $s=\tau_{2n}$ to time $t=\tau_{2n+1}$, $n \in \N$, if $\omega^2_{2n+2}=0$ then $Y_{s,t}(x)=x$, and if $\omega^2_{2n+2}=1$ then $Y_{s,t}(x)$ has the same dyadic expansion of $x$ except for a change in the digit $\omega^1_{2n+1}$;
\item
From time $s=\tau_{2n+1}$ to time $t=\tau_{2n+2}$, $n \in \N$, if $\omega^1_{2n+3}=0$ then $Y_{s,t}(x)=x$, and if $\omega^1_{2n+3}=1$ then $Y_{s,t}(x)$ has the same dyadic expansion of $x$ except for a change in the digit $\omega^2_{2n+2}$.
\end{itemize}

In the lines above we have denoted $Y_{s,t}(x)$ the solution at time $t$ of \eqref{eq:back} passing through $x$ at time $s$ (we will see that it exists and is unique for every $s,t$ as above and almost every $x\in \T^2$).

Geometrically, the action of $Y$ is visualized in \autoref{fig:evolution}.
The evolution via $Y$ from time $s=\tau_{2n}$ to time $t=\tau_{2n+1}$ corresponds to a rigid translation (i.e. without rotations or deformations) of the set $\{ \omega^2_{2n+2}=1,\omega^1_{2n+1}=0\}$ into the set $\{ \omega^2_{2n+2}=1,\omega^1_{2n+1}=1\}$, and viceversa. 
Moreover, after time $t$ the digits $\omega^i_q$ in positions $q \leq 2n+1$ are kept fixed throughout the evolution.
A similar interpretation holds for the evolution from time $s=\tau_{2n+1}$ to time $t=\tau_{2n+2}$, with $\omega^2_{q}$ replaced by $\omega^1_{q+1}$ for every $q \leq 2n+2$, and $\omega^1_{2n+1}$ replaced by $\omega^2_{2n+2}$.

\begin{figure}[h]

\begin{minipage}{12cm}
\begin{tikzpicture}
\shade[left color=black,right color=white] (0,0) rectangle (4,4);

\draw (0,0) -- (0,4) -- (4,4) -- (4,0) -- (0,0) node[below right]{\hspace{1.4cm} $t = \tau_0$};

\draw [-to] (4.8,2) -- (5.2,2);

\shade[left color=black,right color=white] (6,0) rectangle (10,1);
\shade[left color=black,right color=white] (6,2) rectangle (10,3);
\shade[left color=black!50!white,right color=white] (6,1) rectangle (8,2);
\shade[left color=black!50!white,right color=white] (6,3) rectangle (8,4);
\shade[left color=black,right color=black!50!white] (8,1) rectangle (10,2);
\shade[left color=black,right color=black!50!white] (8,3) rectangle (10,4);

\draw (6,0) -- (6,4) -- (10,4) -- (10,0) -- (6,0) node[below right]{\hspace{1.4cm} $t = \tau_1$};

\draw [-to] (10.8,2) -- (11.2,2);
\end{tikzpicture}
\end{minipage}

\vspace{1cm}
\begin{minipage}{12cm}
\begin{tikzpicture}
\draw [-to] (-1.2,2) -- (-0.8,2);

\shade[left color=black,right color=black!87.5!white] (0,0) rectangle (0.5,1);
\shade[left color=black,right color=black!87.5!white] (0,2) rectangle (0.5,3);
\shade[left color=black,right color=black!87.5!white] (2,1) rectangle (2.5,2);
\shade[left color=black,right color=black!87.5!white] (2,3) rectangle (2.5,4);
\shade[left color=black!87.5!white,right color=black!75!white] (0.5,1) rectangle (1,2);
\shade[left color=black!87.5!white,right color=black!75!white] (0.5,3) rectangle (1,4);
\shade[left color=black!87.5!white,right color=black!75!white] (2.5,0) rectangle (3,1);
\shade[left color=black!87.5!white,right color=black!75!white] (2.5,2) rectangle (3,3);
\shade[left color=black!75!white,right color=black!62.5!white] (1,0) rectangle (1.5,1);
\shade[left color=black!75!white,right color=black!62.5!white] (1,2) rectangle (1.5,3);
\shade[left color=black!75!white,right color=black!62.5!white] (3,1) rectangle (3.5,2);
\shade[left color=black!75!white,right color=black!62.5!white] (3,3) rectangle (3.5,4);
\shade[left color=black!62.5!white,right color=black!50!white] (1.5,1) rectangle (2,2);
\shade[left color=black!62.5!white,right color=black!50!white] (1.5,3) rectangle (2,4);
\shade[left color=black!62.5!white,right color=black!50!white] (3.5,0) rectangle (4,1);
\shade[left color=black!62.5!white,right color=black!50!white] (3.5,2) rectangle (4,3);
\shade[left color=black!50!white,right color=black!37.5!white] (0,1) rectangle (0.5,2);
\shade[left color=black!50!white,right color=black!37.5!white] (0,3) rectangle (0.5,4);
\shade[left color=black!50!white,right color=black!37.5!white] (2,0) rectangle (2.5,1);
\shade[left color=black!50!white,right color=black!37.5!white] (2,2) rectangle (2.5,3);
\shade[left color=black!37.5!white,right color=black!25!white] (0.5,0) rectangle (1,1);
\shade[left color=black!37.5!white,right color=black!25!white] (0.5,2) rectangle (1,3);
\shade[left color=black!37.5!white,right color=black!25!white] (2.5,1) rectangle (3,2);
\shade[left color=black!37.5!white,right color=black!25!white] (2.5,3) rectangle (3,4);
\shade[left color=black!25!white,right color=black!12.5!white] (1,1) rectangle (1.5,2);
\shade[left color=black!25!white,right color=black!12.5!white] (1,3) rectangle (1.5,4);
\shade[left color=black!25!white,right color=black!12.5!white] (3,0) rectangle (3.5,1);
\shade[left color=black!25!white,right color=black!12.5!white] (3,2) rectangle (3.5,3);
\shade[left color=black!12.5!white,right color=white] (1.5,0) rectangle (2,1);
\shade[left color=black!12.5!white,right color=white] (1.5,2) rectangle (2,3);
\shade[left color=black!12.5!white,right color=white] (3.5,1) rectangle (4,2);
\shade[left color=black!12.5!white,right color=white] (3.5,3) rectangle (4,4);

\draw (0,0) -- (0,4) -- (4,4) -- (4,0) -- (0,0)
node[below right]{\hspace{1.4cm} $t = \tau_2$};

\draw [-to] (4.8,2) -- (5.2,2);

\shade[left color=black,right color=black!87.5!white] (6,0) rectangle (6.5,0.25);
\shade[left color=black,right color=black!87.5!white] (6,0.5) rectangle (6.5,0.75);
\shade[left color=black,right color=black!87.5!white] (6.5,0.25) rectangle (7,0.5);
\shade[left color=black,right color=black!87.5!white] (6.5,0.75) rectangle (7,1);

\shade[left color=black,right color=black!87.5!white] (6,2) rectangle (6.5,2.25);
\shade[left color=black,right color=black!87.5!white] (6,2.5) rectangle (6.5,2.75);
\shade[left color=black,right color=black!87.5!white] (6.5,2.25) rectangle (7,2.5);
\shade[left color=black,right color=black!87.5!white] (6.5,2.75) rectangle (7,3);

\shade[left color=black,right color=black!87.5!white] (8,1) rectangle (8.5,1.25);
\shade[left color=black,right color=black!87.5!white] (8,1.5) rectangle (8.5,1.75);
\shade[left color=black,right color=black!87.5!white] (8.5,1.25) rectangle (9,1.5);
\shade[left color=black,right color=black!87.5!white] (8.5,1.75) rectangle (9,2);

\shade[left color=black,right color=black!87.5!white] (8,3) rectangle (8.5,3.25);
\shade[left color=black,right color=black!87.5!white] (8,3.5) rectangle (8.5,3.75);
\shade[left color=black,right color=black!87.5!white] (8.5,3.25) rectangle (9,3.5);
\shade[left color=black,right color=black!87.5!white] (8.5,3.75) rectangle (9,4);

\shade[left color=black!87.5!white,right color=black!75!white] (6.5,1) rectangle (7,1.25);
\shade[left color=black!87.5!white,right color=black!75!white] (6.5,1.5) rectangle (7,1.75);
\shade[left color=black!87.5!white,right color=black!75!white] (6,1.25) rectangle (6.5,1.5);
\shade[left color=black!87.5!white,right color=black!75!white] (6,1.75) rectangle (6.5,2);

\shade[left color=black!87.5!white,right color=black!75!white] (6.5,3) rectangle (7,3.25);
\shade[left color=black!87.5!white,right color=black!75!white] (6.5,3.5) rectangle (7,3.75);
\shade[left color=black!87.5!white,right color=black!75!white] (6,3.25) rectangle (6.5,3.5);
\shade[left color=black!87.5!white,right color=black!75!white] (6,3.75) rectangle (6.5,4);

\shade[left color=black!87.5!white,right color=black!75!white] (8.5,0) rectangle (9,0.25);
\shade[left color=black!87.5!white,right color=black!75!white] (8.5,0.5) rectangle (9,0.75);
\shade[left color=black!87.5!white,right color=black!75!white] (8,0.25) rectangle (8.5,0.5);
\shade[left color=black!87.5!white,right color=black!75!white] (8,0.75) rectangle (8.5,1);

\shade[left color=black!87.5!white,right color=black!75!white] (8.5,2) rectangle (9,2.25);
\shade[left color=black!87.5!white,right color=black!75!white] (8.5,2.5) rectangle (9,2.75);
\shade[left color=black!87.5!white,right color=black!75!white] (8,2.25) rectangle (8.5,2.5);
\shade[left color=black!87.5!white,right color=black!75!white] (8,2.75) rectangle (8.5,3);

\shade[left color=black!75!white,right color=black!62.5!white] (7,0) rectangle (7.5,0.25);
\shade[left color=black!75!white,right color=black!62.5!white] (7,0.5) rectangle (7.5,0.75);
\shade[left color=black!75!white,right color=black!62.5!white] (7.5,0.25) rectangle (8,0.5);
\shade[left color=black!75!white,right color=black!62.5!white] (7.5,0.75) rectangle (8,1);

\shade[left color=black!75!white,right color=black!62.5!white] (7,2) rectangle (7.5,2.25);
\shade[left color=black!75!white,right color=black!62.5!white] (7,2.5) rectangle (7.5,2.75);
\shade[left color=black!75!white,right color=black!62.5!white] (7.5,2.25) rectangle (8,2.5);
\shade[left color=black!75!white,right color=black!62.5!white] (7.5,2.75) rectangle (8,3);

\shade[left color=black!75!white,right color=black!62.5!white] (9,1) rectangle (9.5,1.25);
\shade[left color=black!75!white,right color=black!62.5!white] (9,1.5) rectangle (9.5,1.75);
\shade[left color=black!75!white,right color=black!62.5!white] (9.5,1.25) rectangle (10,1.5);
\shade[left color=black!75!white,right color=black!62.5!white] (9.5,1.75) rectangle (10,2);

\shade[left color=black!75!white,right color=black!62.5!white] (9,3) rectangle (9.5,3.25);
\shade[left color=black!75!white,right color=black!62.5!white] (9,3.5) rectangle (9.5,3.75);
\shade[left color=black!75!white,right color=black!62.5!white] (9.5,3.25) rectangle (10,3.5);
\shade[left color=black!75!white,right color=black!62.5!white] (9.5,3.75) rectangle (10,4);

\shade[left color=black!62.5!white,right color=black!50!white] (7.5,1) rectangle (8,1.25);
\shade[left color=black!62.5!white,right color=black!50!white] (7.5,1.5) rectangle (8,1.75);
\shade[left color=black!62.5!white,right color=black!50!white] (7,1.25) rectangle (7.5,1.5);
\shade[left color=black!62.5!white,right color=black!50!white] (7,1.75) rectangle (7.5,2);

\shade[left color=black!62.5!white,right color=black!50!white] (7.5,3) rectangle (8,3.25);
\shade[left color=black!62.5!white,right color=black!50!white] (7.5,3.5) rectangle (8,3.75);
\shade[left color=black!62.5!white,right color=black!50!white] (7,3.25) rectangle (7.5,3.5);
\shade[left color=black!62.5!white,right color=black!50!white] (7,3.75) rectangle (7.5,4);

\shade[left color=black!62.5!white,right color=black!50!white] (9.5,0) rectangle (10,0.25);
\shade[left color=black!62.5!white,right color=black!50!white] (9.5,0.5) rectangle (10,0.75);
\shade[left color=black!62.5!white,right color=black!50!white] (9,0.25) rectangle (9.5,0.5);
\shade[left color=black!62.5!white,right color=black!50!white] (9,0.75) rectangle (9.5,1);

\shade[left color=black!62.5!white,right color=black!50!white] (9.5,2) rectangle (10,2.25);
\shade[left color=black!62.5!white,right color=black!50!white] (9.5,2.5) rectangle (10,2.75);
\shade[left color=black!62.5!white,right color=black!50!white] (9,2.25) rectangle (9.5,2.5);
\shade[left color=black!62.5!white,right color=black!50!white] (9,2.75) rectangle (9.5,3);

\shade[left color=black!50!white,right color=black!37.5!white] (6,1) rectangle (6.5,1.25);
\shade[left color=black!50!white,right color=black!37.5!white] (6,1.5) rectangle (6.5,1.75);
\shade[left color=black!50!white,right color=black!37.5!white] (6.5,1.25) rectangle (7,1.5);
\shade[left color=black!50!white,right color=black!37.5!white] (6.5,1.75) rectangle (7,2);

\shade[left color=black!50!white,right color=black!37.5!white] (6,3) rectangle (6.5,3.25);
\shade[left color=black!50!white,right color=black!37.5!white] (6,3.5) rectangle (6.5,3.75);
\shade[left color=black!50!white,right color=black!37.5!white] (6.5,3.25) rectangle (7,3.5);
\shade[left color=black!50!white,right color=black!37.5!white] (6.5,3.75) rectangle (7,4);

\shade[left color=black!50!white,right color=black!37.5!white] (8,0) rectangle (8.5,0.25);
\shade[left color=black!50!white,right color=black!37.5!white] (8,0.5) rectangle (8.5,0.75);
\shade[left color=black!50!white,right color=black!37.5!white] (8.5,0.25) rectangle (9,0.5);
\shade[left color=black!50!white,right color=black!37.5!white] (8.5,0.75) rectangle (9,1);

\shade[left color=black!50!white,right color=black!37.5!white] (8,2) rectangle (8.5,2.25);
\shade[left color=black!50!white,right color=black!37.5!white] (8,2.5) rectangle (8.5,2.75);
\shade[left color=black!50!white,right color=black!37.5!white] (8.5,2.25) rectangle (9,2.5);
\shade[left color=black!50!white,right color=black!37.5!white] (8.5,2.75) rectangle (9,3);

\shade[left color=black!37.5!white,right color=black!25!white] (6.5,0) rectangle (7,0.25);
\shade[left color=black!37.5!white,right color=black!25!white] (6.5,0.5) rectangle (7,0.75);
\shade[left color=black!37.5!white,right color=black!25!white] (6,0.25) rectangle (6.5,0.5);
\shade[left color=black!37.5!white,right color=black!25!white] (6,0.75) rectangle (6.5,1);

\shade[left color=black!37.5!white,right color=black!25!white] (6.5,2) rectangle (7,2.25);
\shade[left color=black!37.5!white,right color=black!25!white] (6.5,2.5) rectangle (7,2.75);
\shade[left color=black!37.5!white,right color=black!25!white] (6,2.25) rectangle (6.5,2.5);
\shade[left color=black!37.5!white,right color=black!25!white] (6,2.75) rectangle (6.5,3);

\shade[left color=black!37.5!white,right color=black!25!white] (8.5,1) rectangle (9,1.25);
\shade[left color=black!37.5!white,right color=black!25!white] (8.5,1.5) rectangle (9,1.75);
\shade[left color=black!37.5!white,right color=black!25!white] (8,1.25) rectangle (8.5,1.5);
\shade[left color=black!37.5!white,right color=black!25!white] (8,1.75) rectangle (8.5,2);

\shade[left color=black!37.5!white,right color=black!25!white] (8.5,3) rectangle (9,3.25);
\shade[left color=black!37.5!white,right color=black!25!white] (8.5,3.5) rectangle (9,3.75);
\shade[left color=black!37.5!white,right color=black!25!white] (8,3.25) rectangle (8.5,3.5);
\shade[left color=black!37.5!white,right color=black!25!white] (8,3.75) rectangle (8.5,4);

\shade[left color=black!25!white,right color=black!12.5!white] (7,1) rectangle (7.5,1.25);
\shade[left color=black!25!white,right color=black!12.5!white] (7,1.5) rectangle (7.5,1.75);
\shade[left color=black!25!white,right color=black!12.5!white] (7.5,1.25) rectangle (8,1.5);
\shade[left color=black!25!white,right color=black!12.5!white] (7.5,1.75) rectangle (8,2);

\shade[left color=black!25!white,right color=black!12.5!white] (7,3) rectangle (7.5,3.25);
\shade[left color=black!25!white,right color=black!12.5!white] (7,3.5) rectangle (7.5,3.75);
\shade[left color=black!25!white,right color=black!12.5!white] (7.5,3.25) rectangle (8,3.5);
\shade[left color=black!25!white,right color=black!12.5!white] (7.5,3.75) rectangle (8,4);

\shade[left color=black!25!white,right color=black!12.5!white] (9,0) rectangle (9.5,0.25);
\shade[left color=black!25!white,right color=black!12.5!white] (9,0.5) rectangle (9.5,0.75);
\shade[left color=black!25!white,right color=black!12.5!white] (9.5,0.25) rectangle (10,0.5);
\shade[left color=black!25!white,right color=black!12.5!white] (9.5,0.75) rectangle (10,1);

\shade[left color=black!25!white,right color=black!12.5!white] (9,2) rectangle (9.5,2.25);
\shade[left color=black!25!white,right color=black!12.5!white] (9,2.5) rectangle (9.5,2.75);
\shade[left color=black!25!white,right color=black!12.5!white] (9.5,2.25) rectangle (10,2.5);
\shade[left color=black!25!white,right color=black!12.5!white] (9.5,2.75) rectangle (10,3);

\shade[left color=black!12.5!white,right color=white] (7.5,0) rectangle (8,0.25);
\shade[left color=black!12.5!white,right color=white] (7.5,0.5) rectangle (8,0.75);
\shade[left color=black!12.5!white,right color=white] (7,0.25) rectangle (7.5,0.5);
\shade[left color=black!12.5!white,right color=white] (7,0.75) rectangle (7.5,1);

\shade[left color=black!12.5!white,right color=white] (7.5,2) rectangle (8,2.25);
\shade[left color=black!12.5!white,right color=white] (7.5,2.5) rectangle (8,2.75);
\shade[left color=black!12.5!white,right color=white] (7,2.25) rectangle (7.5,2.5);
\shade[left color=black!12.5!white,right color=white] (7,2.75) rectangle (7.5,3);

\shade[left color=black!12.5!white,right color=white] (9.5,1) rectangle (10,1.25);
\shade[left color=black!12.5!white,right color=white] (9.5,1.5) rectangle (10,1.75);
\shade[left color=black!12.5!white,right color=white] (9,1.25) rectangle (9.5,1.5);
\shade[left color=black!12.5!white,right color=white] (9,1.75) rectangle (9.5,2);

\shade[left color=black!12.5!white,right color=white] (9.5,3) rectangle (10,3.25);
\shade[left color=black!12.5!white,right color=white] (9.5,3.5) rectangle (10,3.75);
\shade[left color=black!12.5!white,right color=white] (9,3.25) rectangle (9.5,3.5);
\shade[left color=black!12.5!white,right color=white] (9,3.75) rectangle (9.5,4);

\draw (6,0) -- (6,4) -- (10,4) -- (10,0) -- (6,0)
node[below right]{\hspace{1.4cm} $t = \tau_3$};

\end{tikzpicture}
\end{minipage}

\caption{First three time steps of the evolution via $Y$: from time $s=\tau_0$ to time $t=\tau_1$; from time $s=\tau_1$ to time $t=\tau_2$; and from time $s=\tau_2$ to time $t=\tau_3$.}

\label{fig:evolution}
\end{figure}
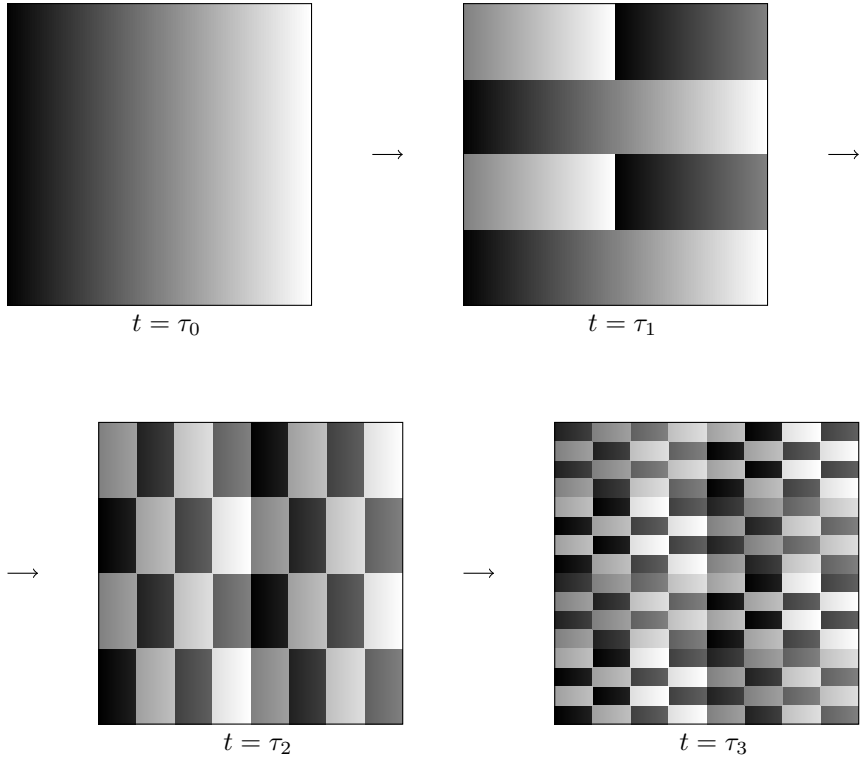

Denote $T:= \lim_{n \to \infty} \tau_n = \sum_{q=1}^\infty \tau^q$.
Algebraically, we can describe the action of $Y_T:\T^2 \to \T^2$ looking at the action of its conjugate $\Psi : \Omega \to \Omega$ via the isomorphism given by dyadic expansion. 
For our purposes, it is sufficient to describe the action of $\Psi$ on a full measure set $U$ invariant under $\Psi$, namely, such that $\Psi : U \to U$.

For $a \in \{0,1\}$, let us consider the following subsets of $\Omega$:
\begin{align*}
U^{odd,a}_{even}
&:=
\{ \omega = (\omega^1,\omega^2) \in \Omega
:\,
\exists\, n_0 \in \N \mbox{ such that }
\omega^1_{2n+2} = a,\, 
\forall n \geq n_0
\}
\\
&=
\bigcup_{n_0 \in \N}
\bigcap_{n \geq n_0}
\{ \omega^1_{2n+2} = a \},
\\
U^{even,a}_{odd}
&:=
\{ \omega = (\omega^1,\omega^2) \in \Omega
:\,
\exists\, n_0 \in \N \mbox{ such that }
\omega^2_{2n+1} = a,\, 
\forall n \geq n_0
\}
\\
&=
\bigcup_{n_0 \in \N}
\bigcap_{n \geq n_0}
\{ \omega^2_{2n+1} = a \}.
\end{align*}
Each of the previous sets is $\mathcal{F}$-measurable and $\PP$-negligible. In particular, they correspond to Borel subsets of $\T^2$ with zero Lebesgue measure, after identifying points of $\T^2$ with their dyadic expansion.
We define 
$$U := \Omega \setminus (U^{odd,0}_{even} \cup U^{odd,1}_{even} \cup U^{even,0}_{odd} \cup U^{even,1}_{odd}).$$

Elements of $U$ are those $\omega=(\omega^1,\omega^2)$ such that neither the sequence of digits $\omega^1_{2n+2}$ nor $\omega^2_{2n+1}$ is definitely constant.
For points $\omega = (\omega^1,\omega^2) \in U$, denote
\begin{align*}
I := &\{ (i,n) : i+n \mbox{ even},\, n \geq 2,\,\omega^i_n = 1\},
\\
I^- := &\{ (j,m) : \exists\, (i,n) \in I \mbox{ such that } j \neq i,\, m = n-1,\,  \}.
\end{align*}
Then $\Psi(\omega) = \Psi(\omega^1,\omega^2)$ is given in coordinates as
\begin{align*}
[\Psi(\omega)]^i_n =
\begin{cases}
\omega^i_n, &\mbox{ if } (i,n) \notin I^-,
\\
\omega^i_n+1 \,(\mbox{mod}\, 2), &\mbox{ if } (i,n) \in I^-.
\end{cases} 
\end{align*}
It is immediate to check $\Psi(\omega) \in U$ for every $\omega \in U$, since digits of the form $\omega^{1}_{2n+2}$ and $\omega^{2}_{2n+1}$ remain unchanged by definition.

\begin{example} \label{ex:1}
For instance, the particular element $$\omega=
\left( \begin{matrix}
\omega^1_1 & \omega^1_2 & \omega^1_3 & \omega^1_4 & \omega^1_5 & \omega^1_6 & \dots
\\
\omega^2_1 & \omega^2_2 & \omega^2_3 & \omega^2_4 & \omega^2_5 & \omega^2_6 & \dots
\end{matrix}
\right)
=
\left( \begin{matrix}
0 & 1 & 1 & 1 & 0 & 1 & \dots
\\
1 & 1 & 0 & 1 & 0 & 0 & \dots
\end{matrix}
\right)$$
evolves in the following way via the action of $\Psi$:
\begin{gather*}
\left( \begin{matrix}
{0} & 1 & 1 & 1 & 0 & 1 & \dots
\\
1 & 1 & 0 & 1 & 0 & 0 & \dots
\end{matrix}
\right)
\\
\downarrow \quad {\omega^2_2 = 1}
\\
\left( \begin{matrix}
{1} & 1 & 1 & 1 & 0 & 1 & \dots
\\
1 & {1} & 0 & 1 & 0 & 0 & \dots
\end{matrix}
\right)
\\
\downarrow \quad {\omega^1_3 = 1}
\\
\left( \begin{matrix}
1 & 1 & 1 & 1 & 0 & 1 & \dots
\\
1 & {0} & 0 & 1 & 0 & 0 & \dots
\end{matrix}
\right)
\\
\downarrow \quad \omega^2_4 = 1
\\
\left( \begin{matrix}
1 & 1 & 0 & 1 & 0 & 1 & \dots
\\
1 & 0 & 0 & 1 & 0 & 0 & \dots
\end{matrix}
\right)
\\
\downarrow \quad \small {\small \omega^1_5 = 0}
\\
\left( \begin{matrix}
1 & 1 & 0 & 1 & 0 & 1 & \dots
\\
1 & 0 & 0 & 1 & 0 & 0 & \dots
\end{matrix}
\right)
\\
\downarrow \quad \omega^2_6 = 0
\\
\left( \begin{matrix}
1 & 1 & 0 & 1 & 0 & 1 & \dots
\\
1 & 0 & 0 & 1 & 0 & 0 & \dots
\end{matrix}
\right)
\\
\downarrow \quad \dots
\end{gather*}
\end{example}

The restriction to $U \subset \Omega$ guarantees that the map $\Psi : U \to U$ is surjective and exactly two-to-one, that is: for every $\bar{\omega} \in U$ there are exactly two $\omega,\omega' \in U$ such that $\Psi (\omega) = \Psi (\omega') = \bar{\omega}$.
This is because, if we want to solve $\Psi(\omega) = \bar{\omega}$, then we can arbitrarily choose either $\omega^1_1=0$ or $\omega^1_1=1$, and then define the other digits $\omega^i_q$ accordingly. 
This procedure yields an element of $U$ since we necessarily have $\omega^1_{2n+2}  = \bar{\omega}^1_{2n+2}$ and $\omega^2_{2n+1}  = \bar{\omega}^2_{2n+1}$.

In addition, the two solutions $\omega,\omega' \in U$ are related to each other by the relation $\omega' = \sigma (\omega)$, where $\sigma:U \to U$ is the ``digits change'' map:
\begin{align} \label{eq:sigma}
[\sigma(\omega)]^i_n =
\begin{cases}
\omega^i_n, &\mbox{ if } i + n \mbox{ odd},
\\
\omega^i_n+1 \,(\mbox{mod}\, 2), &\mbox{ if } i + n \mbox{ even}.
\end{cases} 
\end{align}

\begin{lem} \label{lem:2to1}
For every $\bar{\omega} \in U$ there are exactly two $\omega,\omega' \in U$ such that $\Psi (\omega) = \Psi (\omega') = \bar{\omega}$.
Moreover, $\omega' = \sigma (\omega)$.
\end{lem}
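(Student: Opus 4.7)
The plan is to reduce the claim to a two-term recurrence on $\mathbb{Z}/2\mathbb{Z}$ driven by $\bar\omega$, whose unique degree of freedom is the initial datum $\omega^1_1\in\{0,1\}$.

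First I would observe that, by definition, $I\subset\{(i,n):\,i+n\text{ even},\,n\geq 2\}$, and hence $I^-\subset\{(j,m):\,j+m\text{ even},\,m\geq 1\}$, since passing from $(i,n)$ to $(j,m)=(3-i,n-1)$ flips the parities of both coordinates simultaneously. Consequently, whenever $\Psi(\omega)=\bar\omega$ we must have $\omega^i_n=\bar\omega^i_n$ at every ``odd-parity'' position $(i,n)$, i.e.\ every $(i,n)$ with $i+n$ odd; in particular at all positions of the form $(1,2n+2)$ and $(2,2n+1)$. Combined with $\bar\omega\in U$, this guarantees automatically that any such $\omega$ lies in $\Omega$ and in $U$: neither $\omega^i$ can be eventually $1$ (otherwise the subsequence of odd-parity digits, which coincides with the corresponding subsequence of $\bar\omega^i$, would be eventually $1$, contradicting $\bar\omega\in U$), and the ``not eventually constant'' conditions defining $U$ involve odd-parity digits only and are therefore inherited from $\bar\omega$.

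Next, I would enumerate the even-parity positions in increasing order of $n$,
\[
p_{2j-1}:=(1,2j-1),\qquad p_{2j}:=(2,2j),\qquad j\geq 1,
\]
and write $a_k:=\omega_{p_k}$, $\bar a_k:=\bar\omega_{p_k}\in\{0,1\}$. Writing $p_k=(i_k,k)$, the only pair $(i,n)$ that could put $p_k$ into $I^-$ is the one with $i=3-i_k$ and $n=k+1$, which is precisely $p_{k+1}$; thus $p_k\in I^-$ iff $p_{k+1}\in I$ iff $a_{k+1}=1$. Feeding this into the definition of $\Psi$ yields the key recurrence
\[
\bar a_k\;=\;a_k+a_{k+1}\pmod 2,\qquad k\geq 1,
\]
equivalently $a_{k+1}=a_k+\bar a_k\pmod 2$.

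Given $\bar\omega$, this recurrence has exactly two solutions, corresponding to the two choices $a_1=\omega^1_1\in\{0,1\}$; together with the odd-parity digits (forced to equal those of $\bar\omega$ by Step~1), this produces at most, and in fact exactly, two candidates $\omega,\omega'\in U$, both of which solve $\Psi(\cdot)=\bar\omega$ by construction. A straightforward induction using $a_{k+1}=a_k+\bar a_k$ shows $a_k^{\omega'}=a_k^{\omega}+1\pmod 2$ for every $k\geq 1$: the two candidates differ at every even-parity digit and agree at every odd-parity digit, which is exactly the action of $\sigma$ in \eqref{eq:sigma}. The only step requiring care is identifying $p_{k+1}$ as the unique ``neighbor'' of $p_k$ in $I^-$; once that is in place, both the counting and the relation $\omega'=\sigma(\omega)$ follow.
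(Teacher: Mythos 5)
Your proof is correct and follows essentially the same route as the paper: the odd-parity digits $(i,n)$ with $i+n$ odd are untouched by $\Psi$ and hence forced to agree with $\bar\omega$ (which also yields membership in $U$), while the even-parity digits satisfy the recurrence $a_{k+1}=a_k+\bar a_k \pmod 2$ with the single free bit $a_1=\omega^1_1$. The only cosmetic difference is that the paper invokes $\Psi=\Psi\circ\sigma$ upfront to reduce to the case $\omega^1_1=0$, whereas you recover $\omega'=\sigma(\omega)$ at the end by induction on the recurrence; both are the same counting argument.
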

\begin{proof}
Let us observe preliminarily that, by the very definition of $\Psi$ and $\sigma$,
\begin{align} \label{eq:Psi_sigma}
\Psi (\omega)
=
\Psi \sigma(\omega),
\quad
\forall \omega \in U.
\end{align}
Therefore, in order to prove the lemma it is sufficient to show that for every $\bar{\omega} \in U$ there is exactly one $\omega \in U$ such that $\Psi (\omega) = \bar{\omega}$ and $\omega^1_1=0$. 
Since $\Psi$ leaves unchanged any digit of the form $\omega^1_{2n+2}$ and $\omega^2_{2n+1}$, we necessarily have $\omega^1_{2n+2} = \bar{\omega}^1_{2n+2}$ and $\omega^2_{2n+1} = \bar{\omega}^2_{2n+1}$ for every $n \in \N$. 

Since we are assuming $\omega^1_1=0$, then the other digits of $\omega$ are inductively determined by 
\begin{align*}
\omega^2_{2n+2} =
\begin{cases}
1, \mbox{ if } \omega^1_{2n+1} \neq \bar{\omega}^1_{2n+1},
\\
0, \mbox{ if } \omega^1_{2n+1} = \bar{\omega}^1_{2n+1},
\end{cases}
\end{align*}
and
\begin{align*}
\omega^1_{2n+3} =
\begin{cases}
1, \mbox{ if } \omega^2_{2n+2} \neq \bar{\omega}^2_{2n+2},
\\
0, \mbox{ if } \omega^2_{2n+2} = \bar{\omega}^2_{2n+2}.
\end{cases}
\end{align*}
\end{proof}

\begin{example} \label{ex:2}
For $\bar{\omega}=\left( \begin{matrix}
0 & 1 & 1 & 1 & 0 & 1 & \dots
\\
1 & 1 & 0 & 1 & 0 & 0 & \dots
\end{matrix}
\right)$ 
as in \autoref{ex:1}, the following are the only two solutions of $\Psi(\omega)=\bar{\omega}$:
\begin{gather*}
\begin{array}{ccc}
\left( \begin{matrix}
0 & 1 & * & 1 & * & 1 & \dots
\\
1 & * & 0 & * & 0 & * & \dots
\end{matrix}
\right)
&
\quad
&
\left( \begin{matrix}
1 & 1 & * & 1 & * & 1 & \dots
\\
1 & * & 0 & * & 0 & * & \dots
\end{matrix}
\right)
\\
\downarrow \quad \omega^2_2 = 0
&
\quad
&
\downarrow \quad \omega^2_2 = 1
\\
\left( \begin{matrix}
0 & 1 & * & 1 & * & 1 & \dots
\\
1 & 0 & 0 & * & 0 & * & \dots
\end{matrix}
\right)
&
\quad
&
\left( \begin{matrix}
1 & 1 & * & 1 & * & 1 & \dots
\\
1 & 1 & 0 & * & 0 & * & \dots
\end{matrix}
\right)
\\
\downarrow \quad \omega^1_3 = 1
&
\quad
&
\downarrow \quad \omega^1_3 = 0
\\
\left( \begin{matrix}
0 & 1 & 1 & 1 & * & 1 & \dots
\\
1 & 0 & 0 & * & 0 & * & \dots
\end{matrix}
\right)
&
\quad
&
\left( \begin{matrix}
1 & 1 & 0 & 1 & * & 1 & \dots
\\
1 & 1 & 0 & * & 0 & * & \dots
\end{matrix}
\right)
\\
\downarrow \quad \omega^2_4 = 0
&
\quad
&
\downarrow \quad \omega^2_4 = 1
\\
\left( \begin{matrix}
0 & 1 & 1 & 1 & * & 1 & \dots
\\
1 & 0 & 0 & 0 & 0 & * & \dots
\end{matrix}
\right)
&
\quad
&
\left( \begin{matrix}
1 & 1 & 0 & 1 & * & 1 & \dots
\\
1 & 1 & 0 & 1 & 0 & * & \dots
\end{matrix}
\right)
\\
\downarrow \quad \omega^1_5 = 1
&
\quad
&
\downarrow \quad \omega^1_5 = 0
\\
\left( \begin{matrix}
0 & 1 & 1 & 1 & 1 & 1 & \dots
\\
1 & 0 & 0 & 0 & 0 & * & \dots
\end{matrix}
\right)
&
\quad
&
\left( \begin{matrix}
1 & 1 & 0 & 1 & 0 & 1 & \dots
\\
1 & 1 & 0 & 1 & 0 & * & \dots
\end{matrix}
\right)
\\
\downarrow \quad \omega^2_6 = 1
&
\quad
&
\downarrow \quad \omega^2_6 = 0
\\
\left( \begin{matrix}
0 & 1 & 1 & 1 & 1 & 1 & \dots
\\
1 & 0 & 0 & 0 & 0 & 1 & \dots
\end{matrix}
\right)
&
\quad
&
\left( \begin{matrix}
1 & 1 & 0 & 1 & 0 & 1 & \dots
\\
1 & 1 & 0 & 1 & 0 & 0 & \dots
\end{matrix}
\right)
\\
\downarrow \quad \dots
&
\quad
&
\downarrow \quad \dots
\end{array}
\end{gather*}
\end{example}

Let us conclude this subsection with the following lemma.
\begin{lem} \label{lem:measure}
$\sigma : U \to U$ is measure preserving.
\end{lem}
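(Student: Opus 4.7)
My plan is to exploit the product structure of $\tilde{\PP}$ and the coordinate-wise nature of $\sigma$. First I would extend $\sigma$ to all of $\tilde{\Omega}$ by the same formula \eqref{eq:sigma}, obtaining an involution $\tilde{\sigma} : \tilde{\Omega} \to \tilde{\Omega}$ (each digit is either unchanged or flipped exactly once, so $\tilde{\sigma}\circ\tilde{\sigma} = \mathrm{id}$). Since the digits untouched by $\tilde{\sigma}$ are precisely those of the form $\omega^1_{2n+2}$ and $\omega^2_{2n+1}$, i.e.\ exactly those whose asymptotic behaviour defines membership in $U$, I would check that $\tilde{\sigma}(U) = U$, so that $\sigma = \tilde{\sigma}|_U$ is a well-defined involution on $U$.

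The main point is then to show $\tilde{\sigma}_\sharp \tilde{\PP} = \tilde{\PP}$. The key observation is that $\tilde{\sigma}$ acts level-wise: at level $n$, it flips only $\omega^1_n$ if $n$ is odd and only $\omega^2_n$ if $n$ is even. In either case this induces a bijection $\sigma_n$ of the four-element set $\Omega_n$, which trivially preserves the uniform probability $\PP_n$. Since $\tilde{\PP} = \bigotimes_{n \geq 1} \PP_n$ is a product measure and $\tilde{\sigma}$ is the product of these level-wise bijections, it preserves $\tilde{\PP}$. If desired, this is made rigorous by checking on the $\pi$-system of finite-dimensional cylinders $\{\omega : \omega^{i_j}_{n_j} = a_j,\, j = 1,\dots,k\}$: the preimage under $\tilde{\sigma}$ is again a cylinder constraining the same $k$ digits (with each $a_j$ possibly flipped), and hence has $\tilde{\PP}$-measure $2^{-k}$.

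Finally, since $\PP$ on $(\Omega,\mathcal{F})$ is defined via $\PP(A \cap \Omega) := \tilde{\PP}(A)$ and $\tilde{\sigma}$ preserves both $\tilde{\PP}$ and $U$ (up to $\tilde{\PP}$-null sets), the measure-preservation of $\sigma : U \to U$ follows immediately. I do not foresee any real obstacle; the possible pitfall, namely that $\sigma$ flips infinitely many coordinates at once, is precisely what the product-measure / coordinate-wise perspective is designed to handle, bypassing any need for a finite-dimensional reduction.
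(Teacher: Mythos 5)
Your proposal is correct and follows essentially the same route as the paper: extend $\sigma$ to $\tilde{\sigma}$ on the full product space $\tilde{\Omega}$, verify invariance of $\tilde{\PP}$ on the $\pi$-system of cylinder sets (which for you is transparent because $\tilde{\sigma}$ factors as a product of measure-preserving bijections of each $(\Omega_n,\PP_n)$), and then descend to $U$ using that $U$ has full measure. The only addition you make beyond the paper's proof is the explicit check that $\tilde{\sigma}(U)=U$, which the paper takes for granted; this is a harmless (and correct) extra verification.
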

\begin{proof}
Since $U \subset \Omega$ has full $\PP$-measure and $\Omega \subset \tilde{\Omega}$ has full $\tilde{\PP}$-measure, it is sufficient to prove that $\tilde{\sigma}:\tilde{\Omega} \to \tilde{\Omega}$ defined on the whole $\tilde{\Omega}$ as in \eqref{eq:sigma} preserves $\tilde{\PP}$.
The measures $\tilde{\PP}$ and $(\tilde{\sigma}^{-1})_\sharp \tilde{\PP}$ coincide on the cylinder subsets
\begin{align*}
\mathcal{C} := \{ C \subset \tilde{\Omega} : 
C = \prod_{n \geq 1} C_n , \,
C_n \subset \Omega_n\, \forall n,\,
\exists\, n_0 \in \N \mbox{ such that } C_n = \Omega_n\, 
\forall n > n_0
\}. 
\end{align*}
Since $\mathcal{C}$ is a $\pi$-system generating the $\sigma$-field $\tilde{\mathcal{F}}$, we have $\tilde{\PP}=(\tilde{\sigma}^{-1})_\sharp \tilde{\PP}$. 
Therefore, for every measurable $A \subset U$
\begin{align*}
\PP (\sigma^{-1}(A))
=
\PP (\tilde{\sigma}^{-1}(A) \cap U)
=
\tilde{\PP} (\tilde{\sigma}^{-1}(A))
=
\tilde{\PP} (A)
=
\PP (A).
\end{align*}
\end{proof}

\subsection{The velocity field}
Let us recall the following construction by Depauw \cite{De03}.
Denote $Q = (0,1)^2$ the unit square. For $x \in Q$ let 
\begin{align*}
\psi_\star(x) := -8 \max \{ |x^1-1/2|^2 , |x^2-1/2|^2 \},
\quad
v_\star := \nabla^\perp \psi.
\end{align*}
The velocity $v_\star$ is BV with null divergence, and rotates the square $Q$ clockwise of an angle of $\pi$ radiants in time $t=1/2$.
Then we define $v = v(t,x) := -u(T-t,x)$ at any time $t \in [\tau_{n-1},\tau_{n})$, $n \in \N$, $n \geq 1$, as a collection of many properly rescaled/reflected copies of $v_\star$, glued together in a self similar fashion.

For instance, to obtain that on the time interval $[\tau_0,\tau_1)=[0,\tau)$ the resulting $Y=Y_{0,\tau}$ is a translation of $\{\omega^2_2=1,\omega^1_1=0\}$ into $\{\omega^2_2=1,\omega^1_1=1\}$ and viceversa (up to negligible sets), we can use first a periodic Depauw velocity field with ``cells'' $\{\omega^2_2=1\}$ (see \autoref{fig:Depauw}) and intensity proportional to $1/\tau$, so that after time $\tau/2$ the set $\{\omega^2_2=1,\omega^1_1=0\}$ goes into $\{\omega^2_2=1,\omega^1_1=1\}$ and viceversa, but with a rotation of $\pi$ radiants; and from time $\tau/2$ to time $\tau$ we use distinct Depauw velocity fields with ``cells'' $\{\omega^2_2=1,\omega^1_1=0\}$ and $\{\omega^2_2=1,\omega^1_1=1\}$, with intensity proportional to $1/\tau$, that rotate back of $\pi$ radiants each set separately (the composition of two rotations with different center and opposite angle is a rigid translation).
This is similar to what done by Zizza in \cite{Zi22}.
Notice that essential uniqueness of $Y$ descends from the $L^\infty \cap BV$ regularity of $v_\star$. 

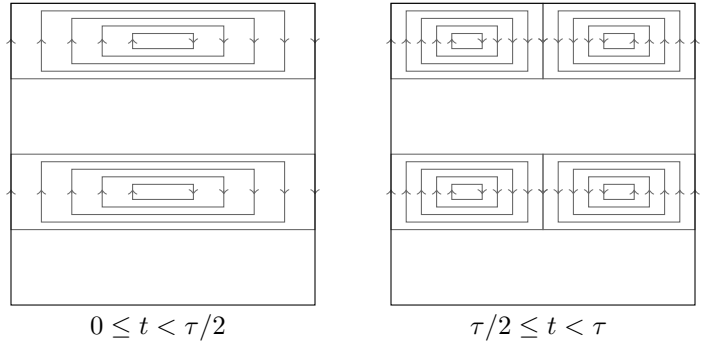
\begin{figure}[h] 
\begin{tikzpicture}

\foreach \i in {1,...,5} {
    \draw[color=black!60!white] (0.4*\i, 0.1*\i) rectangle (-0.4*\i, -0.1*\i);
    
    \draw[color=black!60!white,->] (0.4*\i,0.02) -- (0.4*\i,-0.03);
    \draw[color=black!60!white,->] (-0.4*\i,-0.02) -- (-0.4*\i,0.03);
    
    \draw[color=black!60!white] (0.4*\i, 0.1*\i-2) rectangle (-0.4*\i, -0.1*\i-2);
    
    \draw[color=black!60!white,->] (0.4*\i,0.02-2) -- (0.4*\i,-0.03-2);
    \draw[color=black!60!white,->] (-0.4*\i,-0.02-2) -- (-0.4*\i,0.03-2);
}

\draw (2,0.5) rectangle (-2,-3.5) node[below right]{\hspace{0.8cm} $0 \leq t < \tau/2$};

\foreach \i in {1,...,5} {
    \draw[color=black!60!white] (4+0.2*\i, 0.1*\i) rectangle (4-0.2*\i, -0.1*\i);
    
    \draw[color=black!60!white,->] (4+0.2*\i,0.02) -- (4+0.2*\i,-0.03);
    \draw[color=black!60!white,->] (4-0.2*\i,-0.02) -- (4-0.2*\i,0.03);
    
    \draw[color=black!60!white] (4+0.2*\i, 0.1*\i-2) rectangle (4-0.2*\i, -0.1*\i-2);
    
    \draw[color=black!60!white,->] (4+0.2*\i,0.02-2) -- (4+0.2*\i,-0.03-2);
    \draw[color=black!60!white,->] (4-0.2*\i,-0.02-2) -- (4-0.2*\i,0.03-2);
}

\foreach \i in {1,...,5} {
    \draw[color=black!60!white] (6+0.2*\i, 0.1*\i) rectangle (6-0.2*\i, -0.1*\i);
    
    \draw[color=black!60!white,->] (6+0.2*\i,-0.02) -- (6+0.2*\i,0.03);
    \draw[color=black!60!white,->] (6-0.2*\i,0.02) -- (6-0.2*\i,-0.03);
    
    \draw[color=black!60!white] (6+0.2*\i, 0.1*\i-2) rectangle (6-0.2*\i, -0.1*\i-2);
    
    \draw[color=black!60!white,->] (6+0.2*\i,-0.02-2) -- (6+0.2*\i,0.03-2);
    \draw[color=black!60!white,->] (6-0.2*\i,0.02-2) -- (6-0.2*\i,-0.03-2);
}

\draw (5+2,0.5) rectangle (5-2,-3.5)node[below right]{\hspace{0.8cm} $\tau/2 \leq t < \tau$};
\end{tikzpicture}
\caption{Velocity field $v_1$ for times $t \in [0,\tau/2)$ (left) and $t \in [\tau/2,\tau)$ (right). 
For times $t \in [0,\tau/2)$, the cells of the velocity field are given by $\{\omega^2_1=1,\omega^2_2=1\}$ (top) and $\{\omega^2_1=0,\omega^2_2=1\}$ (bottom).
For times $t \in [\tau/2,\tau)$, the cells of the velocity field are given by $\{\omega^2_1=1,\omega^2_2=1,\omega^1_1=0\}$ (top-left), $\{\omega^2_1=1,\omega^2_2=1,\omega^1_1=1\}$ (top-right), $\{\omega^2_1=0,\omega^2_2=1,\omega^1_1=0\}$ (bottom-left), and $\{\omega^2_1=0,\omega^2_2=1,\omega^1_1=1\}$ (bottom-right).
In both cases, the velocity is zero on the set $\{\omega^2_2=0\}$.}
\label{fig:Depauw}
\end{figure}

Up to rotations of $\pi/2$ radiants, the velocity field will be self-similar: on the time interval $t \in [\tau_{n-1},\tau_{n})$, $n \in \N$, $n \geq 1$, the velocity $v=v_n$ is given by
\begin{align*}
v_n(t,x) = \frac{1}{(2\tau)^n} v_1((t-\tau_{n})/\tau_{n} , 2^{n}x),
\end{align*}
where $v_1$ is the velocity field on the first time interval $[0,\tau)$.
Then, taking $\tau = 1/2$ we get
\begin{align*}
\| v_n \|_{L^\infty_{t,x}}
=
\| v_1 \|_{L^\infty_{t,x}}
< \infty
\end{align*}
uniformly in $n$, implying $\| v \|_{L^\infty_{t,x}} = \| u \|_{L^\infty_{t,x}} < \infty$.

\section{The H\"older continuous case}
\label{sec:holder}

In this section we present a H\"older continuous version of the velocity field from \autoref{sec:bounded} which has the property that the induced flow $Y^\delta_T$ coincides with $Y_T$ on a set $A$ of Lebesgue measure $|A|\geq 1-\delta$, for $\delta >0$ fixed but arbitrary. 
In order to do this, we have to replace the Depauw cells with more regular building blocks, that we describe hereafter.

For the sake of simplicity, we shall restrict ourself to constructing our building blocks on the unit square $Q=(0,1)^2$; the general case in handled by elementary operations as scalings, reflections, and rotations.

Let us recall the following construction from \cite{ElZl19}: define the function $\psi : Q \to \R$ as
\begin{align*}
\psi (x)
:=
2^\frac12 \frac{\sin(\pi x^1)\sin(\pi x^2)}{(\sin(\pi x^1)+\sin(\pi x^2))^\frac12}.
\end{align*}

Contour lines of $\psi$ foliate the unit square $Q$, as can be seen in \autoref{fig:v}. 
The value of $\psi$ ranges between $1$ (at the center of the square) and $0$ (in the limit when $x$ tends to the edges of the square).
The level set $\{ \psi \geq r \}$ is convex for every value of $r \in (0,1]$, see \cite[Lemma 3.1]{ElZl19}. 

Contour lines of $\psi$ are integral curves for the divergence-free velocity field $\nabla^\perp \psi$, and the time it takes a point $x \in \{\psi = r\}$ to run across its level set when moving with velocity $\nabla^\perp \psi$ is given by the line integral
\begin{align} \label{eq:time}
T_{\psi}(r) 
:=
\int_{\{\psi = r\}} \frac{1}{|\nabla \psi|} d\gamma.
\end{align}
\begin{figure}[h]
\captionsetup{justification=centering}
\includegraphics[width=4cm,height=4cm]{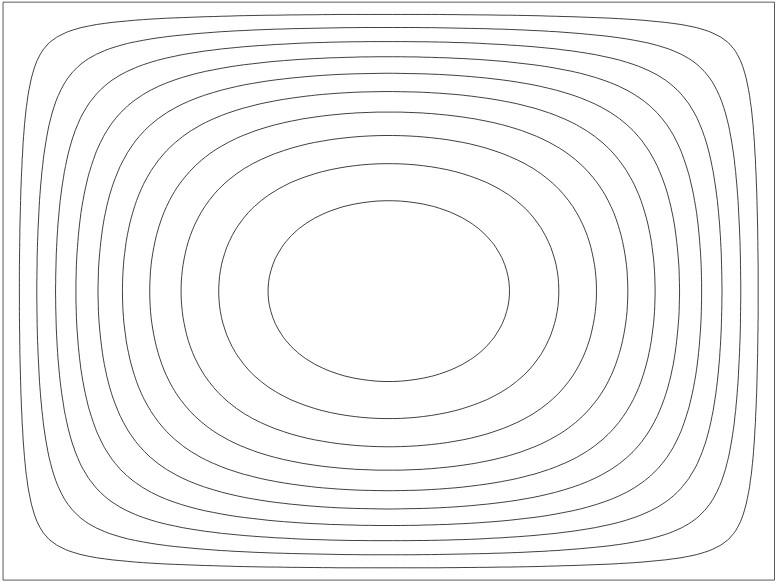}

\caption{Contour lines of $\psi$.}
\label{fig:v}
\end{figure}

Similarly to what done in \cite{ElZl19}, we want to replace $\psi$ with another streamfunction for which \eqref{eq:time} is independent of $r$. However, in order to obtain a velocity field which is globally H\"older continuous we need to modify the construction of \cite{ElZl19} as follows.

Let $\rho : \R_+ \to \R_+$ be a smooth function such that $\rho(r)=r$ for $r \geq 1$ and $\rho(r) = 1/2$ for $r \leq 1/2$. Denote $\rho^\epsilon := \epsilon \rho (\epsilon^{-1} r)$, where $\epsilon>0$ is a free parameter to be properly chosen later, and let
\begin{align*}
\psi^{\epsilon}(x)
:=
\int_0^{\rho^\epsilon(\psi(x))}
T_{\psi}(r) dr.
\end{align*}

\begin{lem} \label{lem:psieps}
There exists a constant $C \in (0,\infty)$, independent of $\epsilon$, such that the following hold:
\begin{itemize}
\item[($i$)] $\psi^\epsilon >0$ on $Q$, and $\psi^\epsilon$ is constant in a neighbourhood of $\partial Q$;
\item[($ii$)]
Let $r_\epsilon:=\int_0^\epsilon T_{\psi}(r) dr$.
Then for every $r \geq r_\epsilon$ the level set $\{\psi^\epsilon = r\}$ is also al level set for $\psi$, namely there exists $r'$ such that $\{\psi^\epsilon = r\}=\{\psi = r'\}$.
\item[($iii$)]
The travelling time $T_{\psi^\epsilon}$, defined as in \eqref{eq:time} with $\psi$ replaced by $\psi^\epsilon$, satisfies $T_{\psi^\epsilon} (r) = 1$ for all $r \in [r_\epsilon, \|\psi^\epsilon\|_{L^\infty(Q)}]$; 
moreover,
\begin{align*}
| \{ x : T_{\psi^\epsilon}(\psi(x)) \neq 1 \}| 
\leq
| \{ x : \psi(x) < \epsilon \} |
\leq
C \epsilon^{\frac23};
\end{align*}
\item[($iv$)]
$\psi^\epsilon \in W^{2,\infty}(Q)$, with
\begin{align*}
\| \psi^\epsilon \|_{W^{1,\infty}(Q)} 
\leq
C,
\quad
\| \psi^\epsilon \|_{W^{2,\infty}(Q)} 
\leq
C \epsilon^{-1}.
\end{align*}
\end{itemize}
\end{lem}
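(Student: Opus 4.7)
The plan is to derive everything from the chain-rule identity
\begin{align*}
\nabla \psi^\epsilon = T_\psi(\rho^\epsilon(\psi))\, (\rho^\epsilon)'(\psi)\, \nabla\psi,
\end{align*}
obtained by differentiating under the integral sign in the definition of $\psi^\epsilon$. Item $(i)$ is immediate: since we may assume $\rho \geq 1/2$ on $\mathbb{R}_+$, we have $\rho^\epsilon \geq \epsilon/2$, so $\psi^\epsilon \geq \int_0^{\epsilon/2} T_\psi(r)\,dr > 0$; moreover, since $\psi$ vanishes continuously on $\partial Q$, on some neighborhood $V$ of $\partial Q$ one has $\psi < \epsilon/2$ and therefore $\rho^\epsilon(\psi) \equiv \epsilon/2$, making $\psi^\epsilon$ constant on $V$. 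For item $(ii)$, on $\{\psi \geq \epsilon\}$ the map $\rho^\epsilon$ reduces to the identity, so $\psi^\epsilon = \Phi \circ \psi$ with $\Phi(s) := \int_0^s T_\psi(r)\,dr$ strictly increasing; in particular for $r \geq \Phi(\epsilon) = r_\epsilon$ the level set $\{\psi^\epsilon = r\}$ coincides with $\{\psi = \Phi^{-1}(r)\}$.

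For item $(iii)$ the chain-rule identity simplifies on $\{\psi \geq \epsilon\}$ to $|\nabla\psi^\epsilon| = T_\psi(\psi)\,|\nabla\psi|$, so plugging into the definition of $T_{\psi^\epsilon}$ and parametrizing the level set gives, for each $r \geq r_\epsilon$,
\begin{align*}
T_{\psi^\epsilon}(r) = \int_{\{\psi = \Phi^{-1}(r)\}} \frac{d\gamma}{T_\psi(\Phi^{-1}(r))\,|\nabla\psi|} = 1.
\end{align*}
In particular the set where $T_{\psi^\epsilon} \neq 1$ is contained (through the bijection of level sets) in $\{\psi < \epsilon\}$. The area estimate follows from the uniform bound $\psi(x) \geq c\, d(x, \partial Q)^{3/2}$, verified directly from the explicit formula: the slowest decay is attained at each corner, where expansion of $\sin$ yields $\psi \sim d^{3/2}$, while away from the corners the decay is of order $d$. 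Hence $\{\psi < \epsilon\} \subseteq \{d(\cdot,\partial Q) < (\epsilon/c)^{2/3}\}$, a tubular neighborhood of area at most $C\epsilon^{2/3}$.

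The technical core is item $(iv)$. For the first-derivative bound the identity gives $|\nabla\psi^\epsilon| \leq T_\psi(\rho^\epsilon(\psi))\, \|\rho'\|_\infty\, |\nabla\psi|$, and since $\|\nabla\psi\|_\infty \leq C$ by inspection of the formula, what remains is to establish the uniform estimate $T_\psi \leq C$ on $(0, \|\psi\|_\infty]$. This is the main obstacle: a scaling argument near each corner shows the level set $\{\psi = r\}$ has length of order $r^{2/3}$ while $|\nabla\psi|^{-1}$ is of order $r^{-1/3}$ there, giving a vanishing corner contribution of order $r^{1/3}$; away from the corners, level sets have uniformly bounded length and $|\nabla\psi|$ is uniformly positive. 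Differentiating the identity once more produces three terms whose $L^\infty$ norms are, respectively, $O(\epsilon^{-2/3})$ from the factor $T_\psi'(\rho^\epsilon(\psi))$ (using $T_\psi(r) = O(r^{1/3})$ near $0$, hence $|T_\psi'(r)| \leq Cr^{-2/3}$ for $r \geq \epsilon/2$), $O(\epsilon^{-1})$ from the factor $(\rho^\epsilon)''(\psi)$ (which provides the claimed blow-up rate since $\|(\rho^\epsilon)''\|_\infty \leq \epsilon^{-1}\|\rho''\|_\infty$), and $O(\epsilon^{-1/3})$ from the factor $D^2\psi$ restricted to $\{\psi \geq \epsilon/2\}$, where $d(x,\partial Q) \gtrsim \epsilon^{2/3}$. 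Summing these three contributions yields $\|\psi^\epsilon\|_{W^{2,\infty}} \leq C\epsilon^{-1}$.
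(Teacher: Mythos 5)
Your argument follows essentially the same route as the paper: items ($i$)--($ii$) are identical, and for ($iv$) you differentiate $\psi^\epsilon = F(\rho^\epsilon(\psi))$ with $F(s)=\int_0^s T_\psi(r)\,dr$ and isolate exactly the same three contributions with the same rates $\epsilon^{-2/3}$, $\epsilon^{-1}$, $\epsilon^{-1/3}$. Two points of comparison. First, for the area bound in ($iii$) you use the pointwise lower bound $\psi(x)\gtrsim d(x,\partial Q)^{3/2}$ and a tubular neighbourhood, whereas the paper inscribes a square in the convex super-level set $\{\psi\geq\epsilon\}$; both give $C\epsilon^{2/3}$ and yours is arguably more elementary. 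Second, the paper simply cites \cite[Lemma 3.1]{ElZl19} for the properties of $T_\psi$ (uniform positivity and boundedness, and $\|T_\psi'\|_{L^\infty(\epsilon/3,1/2)}\leq C\epsilon^{-2/3}$), while you attempt to rederive them. Your boundedness argument for $T_\psi$ is fine, but the step ``$T_\psi(r)=O(r^{1/3})$ near $0$, hence $|T_\psi'(r)|\leq Cr^{-2/3}$'' is not a valid inference (a size bound on a function does not bound its derivative), and the premise itself is doubtful: only the corner portion of the level set contributes $O(r^{1/3})$ to $T_\psi(r)$, while the edge portions contribute an amount bounded below as $r\to 0$, so $T_\psi$ does not vanish at $0$. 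The derivative bound you need is true, but it requires a genuine computation (or the citation the paper uses); as written this sub-step is the one gap in an otherwise matching proof.
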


\begin{proof}
($i$) We have
$\psi^\epsilon>0$ since $\rho^\epsilon \geq \epsilon/2 > 0$ and $T_{\psi}(r)$ is strictly positive for $r \in (0,1)$.
Moreover, by definition of $\rho^\epsilon$ the quantity $\rho^\epsilon(\psi(x))$ is constant on $\{ x: \psi(x) < \epsilon/2\}$, hence so is $\psi^\epsilon$.

($ii$)
We use that $\rho^\epsilon(r)=r$ for $r \geq \epsilon$, and $T_{\psi}(r)$ is strictly positive and uniformly bounded for $r \in (0,1)$ (cf. \cite[Lemma 3.1]{ElZl19}).

($iii$)
Let $r \geq r_\epsilon$.
By point ($ii$), there exists $r' \geq \epsilon$ such that $\{\psi^\epsilon = r\} = \{\psi = r'\}$. 
Hence, by definition of $\psi^\epsilon$ and $\rho^\epsilon$, we have on $\{ \psi^\epsilon = r\}$
\begin{align*}
\nabla \psi^\epsilon
=
T_{\psi}(\rho^\epsilon(\psi)) \nabla (\rho^\epsilon(\psi))
=
T_{\psi}(\psi) \nabla \psi.
\end{align*}
Therefore, the travelling time $T_{\psi^\epsilon}$ satisfies
\begin{align*}
T_{\psi^\epsilon}(r)
&:=
\int_{\{\psi^\epsilon=r\}} \frac{1}{|\nabla \psi^\epsilon|} d\gamma
=
\int_{\{\psi=r'\}} \frac{1}{T_{\psi}(\psi)|\nabla \psi|} d\gamma
=
1.
\end{align*}
As a consequence, we also have $\{T_{\psi^\epsilon}(\psi(x)) \neq 1\} \subset \{ \psi(x) < \epsilon\}$, and thus we only have to bound the Lebesgue measure of the latter set.
Since $\{\psi \geq \epsilon\}$ is convex (see \cite[Lemma 3.1]{ElZl19}), the square $Q_\epsilon$ with vertices
\begin{align*}
q \in \{ x=(x^1,x^2) \in Q : \sin(\pi x^1) = \sin(\pi x^2) = \epsilon^{\frac23} \} \subset \{\psi \geq \epsilon\}
\end{align*}
is entirely contained in the super-level set $\{\psi \geq \epsilon\}$; therefore, $|\{ \psi(x) < \epsilon\}| \leq |Q \setminus Q^\epsilon|$.
We control the latter quantity with four times the area of the strip $\{ \sin(\pi x^1) \in (0,\epsilon^{\frac23}),\, x^1 \leq 1/2\}$, which is easily computed noticing 
\begin{align*}
\pi x^1 \leq C \sin(\pi x^1) \leq C \epsilon^{\frac23}.
\end{align*}

($iv$)
Let $\psi^\star$ be defined as in \cite[Equation (3.2)]{ElZl19} (for the particular value $\alpha=1/2$), namely $\psi^\star(x) := \int_0^{\psi(x)} T_\psi(r) dr$.
Recall from the proof of \cite[Lemma 3.3]{ElZl19} that $D^2 \psi^\star$ is bounded away from $\partial Q$.
Since $\psi^\epsilon$ coincides with $\psi^\star$ on the set $\{ \psi(x) \geq \epsilon\}$, we only need to check the estimates in a neighbourhood of $\partial Q$.
Moreover, having $\psi^\epsilon(x) = \int_0^{\epsilon/2} T_{\psi}(r)dr$ identically on $\{\psi(x) \leq \epsilon/2\}$, we can restrict ourselves to controlling the Sobolev norms of $\psi^\epsilon$ on the domain
\begin{align*}
D_\epsilon := \{ x \in Q : \psi(x) \in (\epsilon/3,1/2) \}.
\end{align*}

We have
\begin{align*}
\| \psi^\epsilon \|_{W^{1,\infty}(D_\epsilon)} 
&\leq
\| T_{\psi} \|_{L^{\infty}(\epsilon/3,1/2)} 
\| \rho^\epsilon(\psi) \|_{W^{1,\infty}(D_\epsilon)}, 
\\
\| \psi^\epsilon \|_{W^{2,\infty}(D_\epsilon)} 
&\leq
\|T'_{\psi}\|_{L^{\infty}(\epsilon/3,1/2)} 
\| \rho^\epsilon(\psi) \|_{W^{1,\infty}(D_\epsilon)}^2
\\
&\quad+
\|T_{\psi}\|_{L^{\infty}(\epsilon/3,1/2)} 
\| \rho^\epsilon(\psi)\|_{W^{2,\infty}(D_\epsilon)}.
\end{align*}

By construction it holds $\| \rho^\epsilon \|_{W^{1,\infty}(\epsilon/3,1/2)} \leq C$ and $\| \rho^\epsilon \|_{W^{2,\infty}(\epsilon/3,1/2)} \leq C \epsilon^{-1}$; in addition, by \cite[Lemma 3.1]{ElZl19} we have
\begin{align*}
\|T'_{\psi}\|_{L^{\infty}(\epsilon/3,1/2)}
\leq
C \epsilon^{-\frac23},
\quad
\| \psi \|_{W^{1,\infty}(D_\epsilon)}
&\leq
C,
\quad
\| \psi \|_{W^{2,\infty}(D_\epsilon)}
\leq
C \epsilon^{-\frac13},
\end{align*}
where the last inequality is justified by the fact that $\psi \geq \epsilon/2$ implies $\sin(\pi x)+\sin(\pi y) \geq 2(\epsilon/2)^{\frac23}$. 
\end{proof}

We are ready to construct the velocity field of \autoref{thm:main}.
More precisely, we construct $v = v(t,x) := -u(T-t,x)$ as in the following:
\begin{prop} \label{prop:v_delta}
For every $k \in \N$, $\alpha \in (0,1)$ and $\delta>0$ there exists an incompressible velocity field $v \in C^k([0,T], C^\alpha_x) \cap C_{loc}([0,T), W^{1,\infty}_x)$ and a measurable $A$ with Lebesgue measure $|A| \geq 1-\delta$ such that the unique solution of $Y^\delta(t,x) = x + \int_0^t v(s,Y^\delta(s,x)) ds$ satisfies
\begin{align*}
Y^\delta(T,x) = Y_T(x),
\quad
\forall x \in A,
\end{align*}
where $Y_T$ is the map constructed in \autoref{sec:bounded}.
\end{prop}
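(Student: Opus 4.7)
The plan is to reproduce the self-similar construction of \autoref{sec:bounded}, replacing each Depauw cell by a scaled copy of the streamfunction $\psi^\epsilon$ from \autoref{lem:psieps} (with a level-dependent parameter $\epsilon = \epsilon_n$), and smoothing in time by a profile chosen so that the resulting flow still performs the prescribed half-rotation of each cell.

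\textbf{Building block and global assembly.} I would fix $\phi \in C^\infty_c((0,1/2))$ with $\int_0^{1/2}\phi(s)\,ds = 1/2$ and define the cell velocity $V^\epsilon(t,x) := \phi(t)\,\nabla^\perp\psi^\epsilon(x)$ on $Q$. By item ($iii$) of \autoref{lem:psieps}, any trajectory starting from $x$ with $T_{\psi^\epsilon}(\psi(x)) = 1$ is transported along its level set of $\psi^\epsilon$ to the antipodal point by time $t = 1/2$, i.e.\ rotated around the centre of $Q$ by $\pi$; the set where this fails has measure $\leq C\epsilon^{2/3}$. Crucially, by item ($i$) the velocity $V^\epsilon$ vanishes in a neighbourhood of $\partial Q$, so neighbouring cells can be juxtaposed without losing regularity. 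On $[\tau_{n-1},\tau_n)$ I then set
\begin{align*}
v(t,x) := (2\tau)^{-n}\, V^{\epsilon_n}\!\bigl((t-\tau_{n-1})/\tau^n,\, 2^n(x-x_{n,i})\bigr)
\end{align*}
on each cell centred at $x_{n,i}$ (prescribed by the scheme of \autoref{sec:bounded}, reproducing the same half-rotations as the Depauw building blocks do), and zero elsewhere; the second subinterval is treated analogously using the two disjoint cells of \autoref{fig:Depauw}. Compact support of $\phi$ at the endpoints of $(0,1/2)$ guarantees $C^\infty$ matching across the time interfaces $t = \tau_n$.

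\textbf{Regularity and parameter choice.} Combining the spatial/temporal rescalings with item ($iv$) of \autoref{lem:psieps} and $L^\infty$–Lipschitz interpolation yields
\begin{align*}
\|v\|_{C^k([\tau_{n-1},\tau_n);\,C^\alpha_x)} \leq C\, 2^{-n(1-\alpha)}\,\tau^{-n(k+1)}\,\epsilon_n^{-\alpha}.
\end{align*}
Choose $\tau = 2^{-\theta}$ with $0 < \theta(k+1) < 1-\alpha$, then $\epsilon_n = c_\delta \gamma^{-n}$ with $1 < \gamma^\alpha < 2^{1-\alpha-\theta(k+1)}$, and finally $c_\delta = O(\delta^{3/2})$ small enough that $\sum_n \epsilon_n^{2/3} < \delta/C$. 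Each such $n$-th bound is then the $n$-th term of a convergent geometric series, so $v \in C^k([0,T],C^\alpha_x)$ with $v(T,\cdot) = 0$. Summing the $W^{2,\infty}_x$ bound on the finitely many scales contributing on $[0,T-\eta]$ also yields $v \in C_{\mathrm{loc}}([0,T),W^{1,\infty}_x)$, so Cauchy–Lipschitz produces a unique flow $Y^\delta$ on each $[0,T-\eta]$ which, by uniform boundedness of $v$, extends uniquely to $[0,T]$.

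\textbf{Matching with $Y_T$ and the main obstacle.} Let $B_n$ denote the scale-$n$ exceptional set — points where the scale-$n$ building block fails to rotate by exactly $\pi$; item ($iii$) of \autoref{lem:psieps} and self-similarity give $|B_n| \leq C\epsilon_n^{2/3}$, so $A := \T^2 \setminus \bigcup_n B_n$ has $|A| \geq 1-\delta$. For $x \in A$ every scale performs exactly the rotation (and, composing two consecutive ones, exactly the translation of the relevant subset) prescribed in \autoref{ssec:dyadic}, so the dyadic description of $Y_T$ applies verbatim and $Y^\delta(T,x) = Y_T(x)$. The technical heart of the argument is the three-way parameter balance: (i) $\epsilon_n$ must decay fast enough for $\sum \epsilon_n^{2/3} < \delta$; (ii) $\epsilon_n$ cannot decay too fast, since the factor $\epsilon_n^{-\alpha}$ would otherwise destroy summability of the $C^k_t C^\alpha_x$ norms; and (iii) the vanishing of $V^\epsilon$ near $\partial Q$ (spatial gluing) together with $\phi$ being supported strictly inside $(0,1/2)$ (temporal gluing) is exactly what prevents boundary discontinuities when the scales are stacked together — this is precisely why $\psi^\epsilon$, which is constant near $\partial Q$, is used in place of Depauw's original cell.
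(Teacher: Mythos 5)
Your proposal follows essentially the same route as the paper: the time-modulated cell velocity $\chi(t)\,\nabla^\perp\psi^\epsilon$, the self-similar rescaling with a geometrically decaying level-dependent $\epsilon_n$, the same $C^k_t C^\alpha_x$ estimate and three-way parameter balance, and the exceptional-set count via item ($iii$) of \autoref{lem:psieps}. The only point where the paper is more explicit is the definition of $A$: the scale-$n$ exceptional set lives at time $\tau_{n-1}$, so it must be pulled back to time $0$ through $(Y^\delta_{\tau_{n-1}})^{-1}$ (using that $Y^\delta$ preserves Lebesgue measure for $t<T$, so the measure bound survives the pullback) before taking unions and complements --- your $B_n$ should be read as that pullback.
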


\begin{proof}
Let us define $v^\epsilon := \nabla^\perp \psi^\epsilon$.
By construction, the particles moving with the velocity $v^\epsilon$ run across the contour lines $\{ \psi = r \}$ in time $1$ for every $r \geq \epsilon$. 
In particular, by fourfold symmetry $v^\epsilon$ rotates the super-level set $\{\psi \geq \epsilon\}$ of $\pi$ radiants in time $t=1/2$.

Let $\chi$ be a smooth function with support compactly contained in $(0,1/2)$ and satisfying $\int_0^{1/2} \chi(t)dt=1$.
In order to obtain a velocity which is smooth with respect to time, 
we define $\hat{v}^\epsilon:(0,1/2) \times Q \to \R^2$ as
\begin{align*}
\hat{v}^\epsilon (t,x) := \chi(t) v^\epsilon(x).
\end{align*} 
Then, by construction also $\hat{v}^\epsilon$ rotates the super-level set $\{\psi \geq \epsilon\}$ of $\pi$ radiants in time $t=1/2$.

Therefore, by point ($iii$) of \autoref{lem:psieps}, the rescaled velocity
\begin{align*}
v^\epsilon_n (t,x)
:= 
\frac{1}{(2\tau)^n}
\hat{v}^\epsilon ({t}/{\tau^n}, 2^n x),
\quad
n \in \N,
n \geq 1,
\end{align*}
rotates at least a fraction $1-C \epsilon^{\frac23}$ of the rescaled square $2^{-n}Q$ of an angle of $\pi$ radiants in time $t=\tau^n/2$.

Since $\psi^\epsilon$ is constant in a neighbourhood of $\partial Q$, the velocity $v^\epsilon$ vanishes at $\partial Q$. Therefore, we can glue together many (properly rescaled and reflected) copies of $v^\epsilon_n$ as in \autoref{fig:Depauw} to obtain a H\"older continuous velocity $\hat{v}^\epsilon_n : (\tau_{n-1},\tau_n) \times Q \to \R^2$ satisfying $\hat{v}^\epsilon_n(\tau_{n-1},\cdot) = \hat{v}^\epsilon_n(\tau_n,\cdot) = 0$, $\hat{v}^\epsilon_n(\cdot,x) = 0$ for $x$ in a neighbourhood of $\partial Q$, and the bounds
\begin{align*}
\| \hat{v}^\epsilon_n \|_{C_t W^{1,\infty}(Q)}
&\leq
\frac{1}{\tau^n} 
\| \chi \|_{C_t}
\| v^\epsilon \|_{W^{1,\infty}(Q)}
\leq
C'
\frac{1}{\tau^n} \epsilon^{-1},
\\
\| \partial_t^h \hat{v}^\epsilon_n \|_{C_t C^\alpha(Q)}
&\leq
\left( \frac{2^{\alpha-1}}{\tau^{1+h}}\right)^n
\| \partial_t^h \chi \|_{C_t}
\| v^\epsilon \|_{C^\alpha(Q)}
\leq
C' 
\left( \frac{2^{\alpha-1}}{\tau^{1+h}}\right)^n \epsilon^{-\alpha},
\end{align*}
for every $h \in \N$, $h \leq k$ and some finite constant $C'$ depending only on $\chi$, $k$, and the constant $C$ of \autoref{lem:psieps}.
The last inequality comes from interpolation between $W^{1,\infty}(Q)$ and $W^{2,\infty}(Q)$ Sobolev norms of $\psi^\epsilon$ given by point ($iv$) of \autoref{lem:psieps}.

Choosing $\epsilon = \epsilon_n := \epsilon_\star \kappa^n$ with $\kappa \in (0,1)$ and $\epsilon_\star \ll 1$, the previous inequalities become 
\begin{align*}
\| \hat{v}^{\epsilon_n}_n \|_{C_t W^{1,\infty}(Q)}
&
\leq
C' \epsilon_\star^{-1} \left( \frac{1}{\tau \kappa} \right)^n ,
\\
\| \partial_t^h \hat{v}^{\epsilon_n}_n \|_{C_t C^\alpha(Q)}
&\leq
C' \epsilon_\star^{-\alpha}
\left( \frac{2^{\alpha-1}}{\tau^{1+h} \kappa^\alpha}\right)^n 
.
\end{align*}
Therefore, we take
\begin{align*}
\tau^{1+k} = \kappa^\alpha = 2^{\frac{\alpha-1}{3}} \in (0,1),
\end{align*}
so that $\hat{v}^{\epsilon_n}_n$ tends to $0$ in $C^k_t C^\alpha(Q)$ as $n \to \infty$.
In this way, the divergence-free velocity field $v := \sum_{n \geq 1} \hat{v}^{\epsilon_n}_n \mathbf{1}_{\{ t \in (\tau_{n-1},\tau_n)\}}$, extended periodically on the full torus $\T^2$, is in $C^k([0,T],C^\alpha(\T^2)) \cap C_{loc}([0,T),W^{1,\infty}(\T^2))$ for $T := \sum_q \tau^q < \infty$.

Finally, $v \in C_{loc}([0,T),W^{1,\infty}(\T^2))$ with null divergence implies that there exists a unique flow $Y^\delta$ associated to $v$, which preserves the Lebesgue measure for every $t<T$.
By ($iii$) of \autoref{lem:psieps}, on every time interval of the form $[\tau_{n-1},\tau_{n-1} + \tau^n/2)$ (resp. $[\tau_{n-1} + \tau^n/2,\tau_n)$) the flow $Y^\delta$ coincides with $Y$ on a measurable set $A^1_n$ (resp. $A^2_n$) of Lebesgue measure at least
\begin{align*}
|A^1_n| = |A^2_n| \geq 1- \frac{C}{2} \epsilon_\star^{\frac23} \kappa^{\frac{2n}{3}}.
\end{align*}
Defining
\begin{align*}
A := \bigcap_{n \geq 1} 
(Y^\delta_{\tau_{n-1}})^{-1} (A^1_n)
\cap
(Y^\delta_{\tau_{n-1} + \tau^n/2})^{-1} (A^2_n),
\end{align*}
we have that $Y^\delta_T$ coincides with $Y_T$ on $A$ and 
\begin{align*}
|A| 
\geq 
1- C \epsilon_\star^{\frac23} \sum_{n} \kappa^{\frac{2n}{3}}
\geq  
1-\delta,
\end{align*}
up to choosing $\epsilon_\star = \epsilon_\star(\alpha,\delta)$ sufficiently small.
\end{proof}

\begin{rmk}
Notice that $v^\epsilon \in C^1_x$ except at most at the center of the square $Q$, where it vanishes.  
With only minor modifications to the definition of the streamfunction $\psi^\epsilon$, we can make $v^\epsilon \equiv 0$ in an arbitrary small neighbourhood of the center of $Q$, improving the $W^{1,\infty}_x$ bound on $v$ to an actual $C^1_x$ bound (up to replacing the bound on $|A|$ with $|A| \geq 1-2\delta$). 
In particular, the incompressibility condition in \autoref{prop:v_delta} (and in \autoref{thm:main}) can be understood in the strong analytic sense.
\end{rmk}

\subsection{Proof of \autoref{thm:main}}
We finally give the proof of our main result.
\begin{proof}[Proof of \autoref{thm:main}]
Let $v$ and $A$ as in \autoref{prop:v_delta}, and define $u(t,x) := -v(T-t,x)$.
Let us suppose per absurdum there exists a measure-preserving selection of characteristic $X$ for the velocity field $u$.

Let $x(U):=U_x \subset \T^2$ be given by the image of $U$ under the map $x:\Omega \to \T^2$ defined in \autoref{ssec:dyadic}.
$U_x$ is a Borel set with full Lebesgue measure.
Since $X_T$ is Lebesgue measurable we can write 
\begin{align*}
X_T^{-1}(\{ \omega^1_1 = 0 \} \cap A) = B_0 \cup N_0,
\\
X_T^{-1}(\{ \omega^1_1 = 1 \} \cap A) = B_1 \cup N_1,
\end{align*}
where $B_0,B_1$ are Borel subsets of $U_x \subset \T^2$ and $N_0,N_1$ are Lebesgue measurable with $|N_0|=|N_1|=0$.
Since we are assuming $X_T$ measure-preserving, it holds $|B_0 \cup B_1| = |A| \geq 1-\delta$.
Moreover, without loss of generality we may also assume that $X(\cdot,x)$ is a solution of the ODE \eqref{eq:ODE} for every $x \in B_0 \cup B_1$.
By uniqueness of solutions to \eqref{eq:back}, on the time interval $t\in[0,T]$ we have for every $x \in B_0 \cup B_1$
\begin{align*}
X(T-t,x) = Y^\delta(t,X_T(x)).
\end{align*}

Since $Y^\delta_T|_{A \cap U_x} = Y_T|_{A \cap U_x}:A \cap U_x \to U_x$ is Borel measurable (as pointwise limit as $n\to \infty$ of the Borel measurable maps $Y_{\tau_n}$) we have that for $i \in \{0,1\}$ the set
\begin{align*}
C_i := X_T(B_i) \cap A \cap U_x = (Y_T|_{A \cap U_x})^{-1} (B_i) \cap \{ \omega^1_1 = i\} 
\end{align*}
is Borel measurable.
Then we have
\begin{align*}
X_T(B_0 \sqcup B_1) \cap A \cap U_x
=
C_0 \sqcup C_1
\end{align*} 
where $\sqcup$ denotes the disjoint union since $X_T(B_i) \subset \{\omega^1_1 = i\}$.
Therefore, denoting $\sigma_x:U_x \to U_x$ the map conjugate to the digits change map $\sigma : U \to U$ defined by \eqref{eq:sigma}, we have   
\begin{align*}
|X_T(B_0 \sqcup B_1) \cap A \cap U_x|
&=
|C_0|
+
|C_1|
\\
&=
|\sigma_x (C_0)|
+
|\sigma_x (C_1)|,
\end{align*}
since the map $\sigma$ is measure-preserving by \autoref{lem:measure}.
Moreover, by \eqref{eq:Psi_sigma} it holds $Y_T \sigma_x (x) = Y_T (x)$ for every $x \in U_x$; therefore, the sets
\begin{align*}
C_0,\quad
C_1,\quad
\sigma_x (C_0),\quad
\sigma_x (C_1),
\end{align*}
are pairwise disjoint. Indeed, $\sigma_x C_0, \sigma_x C_1$ are disjoint (because $Y_T \sigma_x = Y_T$ on $U_x$, and $Y_T C_0 \cap Y_T C_1 = \emptyset$) and are disjoint from $X_T(B_0 \sqcup B_1) \cap A \cap U_x$, otherwise
\begin{align*}
y = X_T(x) = \sigma_x (X_T(x'))
\Rightarrow
Y_T(y) = Y_T(X_T(x)) = Y_T (\sigma_x (X_T(x')))
\Rightarrow
x = x'
\end{align*}
but $X_T(x) \neq \sigma_x (X_T(x))$ since $\sigma_x$ has no fixed point in $U_x$.
Therefore,
\begin{align*}
\sigma_x (C_0) \sqcup \sigma_x (C_1)
&=
\sigma_x (C_0 \sqcup C_1)
=
\sigma_x(X_T(B_0 \sqcup B_1) \cap A \cap U_x);
\end{align*}
in particular, $X_T(B_0 \sqcup B_1) \cap A \cap U_x$ and $\sigma_x (X_T(B_0 \sqcup B_1) \cap A \cap U_x)$ are disjoint sets with the same Lebesgue measure, implying (recall $U_x$ has full measure)
\begin{align*}
|X_T(B_0 \sqcup B_1) \cap A| = |X_T(B_0 \sqcup B_1) \cap A \cap U_x| \leq 1/2.
\end{align*} 
But this gives a contradiction for $\delta <1/4$,  since
\begin{align*}
|X_T(B_0 \sqcup B_1)|
=
|X_T(B_0 \sqcup B_1) \cap A|
+
|X_T(B_0 \sqcup B_1) \cap A^c|
\leq
1/2+\delta,
\end{align*}
but
\begin{align*}
1-\delta
\leq
|B_0 \sqcup B_1| 
\leq 
|X_T^{-1} (X_T(B_0 \sqcup B_1))|
=
|X_T(B_0 \sqcup B_1)|
\leq 
1/2+\delta.
\end{align*}
\end{proof}

\bibliographystyle{alpha}

\end{document}